\documentclass[letterpaper]{amsart}
\usepackage{amsmath,amsfonts,amsthm,amssymb}
\usepackage[all]{xy}
\usepackage{enumerate}
\usepackage{graphicx}
\usepackage{mathtools}
\usepackage{hyperref}
\usepackage{bbm}
\usepackage{xcolor}
\usepackage[T1]{fontenc}
\usepackage{paralist}

\newcommand{\RR}{\mathbb{R}}

\newcommand{\NN}{\mathbb{N}}
\newcommand{\EE}{\mathbb{E}}

\newcommand{\dd}{\mathrm{d}}

\theoremstyle{definition}
\newtheorem{thm}{Theorem}[section]
\newtheorem{lma}[thm]{Lemma}

\newtheorem{prop}[thm]{Proposition}
\newtheorem{rmk}[thm]{Remark}

\newtheorem{definition}[thm]{Definition}

\allowdisplaybreaks

\title[Centroids for weak optimal transport]{Centroids for weak optimal transport with barycentric cost}

\author{Friedemann Krannich}
\date{September 15, 2026}
\address{Department of Mathematics, University of Toronto, Bahen Centre, 40 St. George St., Toronto, ON, M5S 2E4, Canada}
\email{f.krannich@mail.utoronto.ca}
\thanks{The author received support from the Department of Mathematics at the University of Toronto, the Canada Research Chairs program (Grant \#CRC-2020-00289) and the Natural Sciences and Engineering Research Council of Canada Discovery Grant (Grant \#RGPIN-2020-04162 and Grant \#RGPIN-2024-06235).}
\subjclass{49Q22, 60G42, 49J55}
\keywords{Weak optimal transport, Wasserstein barycenter, Convex order, Multi-marginal optimal transport}

\begin{document}
	
\begin{abstract} 
	We generalize the theory of Wasserstein barycenters between N given measures from the Wasserstein distance to the weak optimal transport problem with barycentric cost. We find a multi-marginal formulation and its dual problem. The multi-marginal problem is new in itself; contrary to the classical multi-marginal optimal transport problem, the first M marginals of its competitors are not fixed but instead only need dominate the first M given measures in convex order, while the last N-M marginals of its competitors are dominated in convex order by the last N-M given measures. 
\end{abstract}

\maketitle

\section{Introduction}
\subsection{From classical to weak optimal transport}
Let $\Omega\subset\RR^d$ be a compact and convex set.
The \textit{classical optimal transport problem} describes the minimal cost of transporting a probability measure $\mu_1\in\mathcal P(\Omega)$ to a probability measure $\mu_2\in\mathcal P(\Omega)$ through
\begin{gather}\label{pWasserstein}
	W_p(\mu_1,\mu_2)\coloneqq \Big(\inf_{\gamma\in\Pi(\mu_1,\mu_2)}\int_{\Omega^2} |x-y|^p\ \dd\gamma(x,y)\Big)^{\frac{1}{p}}
\end{gather}
where the set of transport plans $\Pi(\mu_1,\mu_2)$ are all couplings $\gamma\in\mathcal P(\Omega^2)$ that have marginals $\mu_1$ and $\mu_2$ and where $p\in (1,\infty)$. $W_p$ in \eqref{pWasserstein} is called the \textit{$p$-Wasserstein distance} between $\mu_1$ and $\mu_2$. The classical optimal transport problem has been generalized in many directions. One of the generalizations, which has recently received a lot of attention, is the \textit{weak optimal transport problem}, introduced by Gozlan, Roberto, Samson, and Tetali \cite{gozlan2017}. The weak optimal transport problem for $\mu_1,\mu_2\in\mathcal P(\Omega)$ in its most general form is given by
\begin{gather*}
	\mathcal T_c(\mu_2|\mu_1)\coloneqq \inf_{\gamma\in\Pi(\mu_1,\mu_2)}\int_{\Omega}c(x,\gamma_x)\ \dd\mu_1(x)
\end{gather*}
where $\dd\gamma(x,y)=\dd \gamma_x(y)\dd\mu_1(x)$ ($\gamma_x$ is the disintegration of $\gamma$ with respect to its first marginal $\mu_1$) and where $c:\Omega\times \mathcal P(\Omega)\rightarrow\RR$ is a cost function. The importance of the weak optimal transport problem stems from its flexibility in the choice of the cost function. It generalizes the classical optimal transport problem in particular: If we pick $c(x,\gamma_x)=\int_{\Omega}|x-y|^p\ \dd \gamma_x(y)$ then $\mathcal T_c(\mu_2|\mu_1)=W_p^p(\mu_1,\mu_2)$.
The weak optimal transport problem has found applications, amongst others, in mathematical finance \cite{acciaio2021}, economics \cite{paty2022}, and the Schr{\"o}dinger problem \cite{backhoff2022}. An overview over the versatility of the weak optimal transport problem and a further literature review can be found in \cite{backhoff2022}.\\
A cost function that has received particular attention, is the \textit{barycentric cost}
\begin{gather}\label{weakbarycentriccost}
	c_p(x,\gamma_x)\coloneqq\Big|x-\int_{\Omega}y\ \dd \gamma_x(y)\Big|^p=|x-\EE[\gamma_x]|^p
\end{gather}
where $p\in (1,\infty)$. This is due to its link with $W_p$ and the theory of stochastic orderings:
\begin{definition}
	For $\mu_1,\eta_1\in\mathcal P(\Omega)$ we define the \textit{convex order} by
	\begin{gather*}
		\eta_1\leq_{\text{cx}}\mu_1\ :\Leftrightarrow \int_{\Omega}\varphi(x)\ \dd\eta_1(x)\leq\int_{\Omega}\varphi(x)\ \dd\mu_1(x)\quad \forall\varphi:\RR^d\rightarrow\RR\text{ convex.}
	\end{gather*}
\end{definition}
The intuition behind the convex order is, that if $\eta_1\leq_{\text{cx}}\mu_1$ then $\EE[\mu_1]=\EE[\eta_1]$ and $\mu_1$ is more spread out than $\eta_1$.\\
Gozlan and Juillet \cite{gozlan2020} and Alfonsi, Corbetta, and Jourdain  \cite{alfonsi2020} discovered that the \textit{weak optimal transport problem with barycentric cost}
\begin{gather*}
	\mathcal T_p(\mu_2|\mu_1)\coloneqq\mathcal T_{c_p}(\mu_2|\mu_1)=\inf_{\gamma\in\Pi(\mu_1,\mu_2)}\int_{\Omega}\Big|x-\int_{\Omega}y\ \dd \gamma_x(y)\Big|^p\ \dd\mu_1(x)
\end{gather*}
describes the $p$-Wasserstein projection of $\mu_1$ onto the set of probability measures that are in convex order with $\mu_2$. To this end, for a Borel map $T:\RR^d\rightarrow\RR^d$ and $\mu\in\mathcal P(\RR^d)$ we define the \textit{pushforward measure} $T_\#\mu\in\mathcal P(\RR^d)$ through
\begin{gather*}
	(T_\#\mu)(A)\coloneqq \mu(T^{-1}(A))
\end{gather*}
for every $A\subseteq\RR^d$ Borel.
\begin{thm}\cite[Prop 1.1]{gozlan2020},\cite[Thm 2.1]{alfonsi2020}\label{weakforwardprojthm}
	Let $p\in(1,\infty)$ and $\mu_1,\mu_2\in\mathcal P(\Omega)$. Then, there exists $\bar\eta_2\leq_{\text{cx}}\mu_2$ such that
	\begin{gather*}
		\mathcal T_p(\mu_2|\mu_1)=\inf_{\eta_2\leq_{\text{cx}}\mu_2}W_p^p(\mu_1,\eta_2)=W_p^p(\mu_1,\bar\eta_2)
	\end{gather*}
	and $\bar\eta_2$ is uniquely attained. Further, $\bar\eta_2=T_{\#}\mu_1$ where $T:\Omega\rightarrow\Omega$ is the unique map such that $(\text{id}\times T)_{\#}\mu_1$ minimizes $W_p^p(\mu_1,\bar\eta_2)$ form \eqref{pWasserstein}.
\end{thm}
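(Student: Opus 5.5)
The plan is to prove the two inequalities between $\mathcal T_p(\mu_2|\mu_1)$ and $\inf_{\eta_2\leq_{\text{cx}}\mu_2}W_p^p(\mu_1,\eta_2)$ separately, then obtain existence by compactness, and finally uniqueness from strict convexity.

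\emph{Step 1: $\mathcal T_p\geq\inf$.} Given $\gamma\in\Pi(\mu_1,\mu_2)$, define the barycentric map $T_\gamma(x)\coloneqq\EE[\gamma_x]$. It takes values in $\Omega$ because $\Omega$ is convex. Set $\eta_2\coloneqq(T_\gamma)_\#\mu_1$. For $\varphi$ convex, Jensen's inequality gives
\[
\int\varphi\,\dd\eta_2=\int\varphi(\EE[\gamma_x])\,\dd\mu_1(x)\leq\int\!\!\int\varphi(y)\,\dd\gamma_x(y)\,\dd\mu_1(x)=\int\varphi\,\dd\mu_2,
\]
so $\eta_2\leq_{\text{cx}}\mu_2$. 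Moreover $(\text{id}\times T_\gamma)_\#\mu_1\in\Pi(\mu_1,\eta_2)$, hence $W_p^p(\mu_1,\eta_2)\leq\int c_p(x,\gamma_x)\,\dd\mu_1$.

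\emph{Step 2: $\mathcal T_p\leq\inf$.} Given $\eta_2\leq_{\text{cx}}\mu_2$, Strassen's theorem provides a martingale coupling $m\in\Pi(\eta_2,\mu_2)$ with $\EE[m_z]=z$. Let $\pi$ be optimal for $W_p(\mu_1,\eta_2)$ and glue the two plans: put $\gamma_x\coloneqq\int m_z\,\dd\pi_x(z)$. Then $\gamma\in\Pi(\mu_1,\mu_2)$ and $\EE[\gamma_x]=\int z\,\dd\pi_x(z)$. Jensen's inequality for $|x-\cdot|^p$ gives $\int c_p(x,\gamma_x)\,\dd\mu_1\leq\int|x-z|^p\,\dd\pi=W_p^p(\mu_1,\eta_2)$. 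This proves the equality of the two problems.

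\emph{Existence.} The set $\{\eta_2\leq_{\text{cx}}\mu_2\}$ is weakly closed, since the defining inequalities are tested against continuous convex functions on the compact set $\Omega$. Weak compactness of $\mathcal P(\Omega)$ and lower semicontinuity of $W_p^p(\mu_1,\cdot)$ then give a minimizer $\bar\eta_2$. Alternatively, one can take a minimizing $\gamma$ for $\mathcal T_p$ (using compactness of $\Pi(\mu_1,\mu_2)$ and lower semicontinuity of the weak cost, which follows from convexity of $c_p(x,\cdot)$) and push it through Step 1.

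\emph{Uniqueness and the map $T$: the main obstacle.} The core of the statement is that $\bar\eta_2$ is unique and is induced by a unique map. The plan is as follows.
\begin{enumerate}[(i)]
\item Let $\pi$ be any optimal plan for $W_p(\mu_1,\bar\eta_2)$ and consider $T(x)\coloneqq\int z\,\dd\pi_x(z)$. By Jensen, $T_\#\mu_1\leq_{\text{cx}}\bar\eta_2\leq_{\text{cx}}\mu_2$.
\item By strict convexity of $|\cdot|^p$ for $p>1$, the cost $\int|x-T(x)|^p\,\dd\mu_1$ is strictly smaller than that of $\pi$ unless $\pi_x$ is a Dirac mass $\mu_1$-a.e. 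Optimality of $\bar\eta_2$ therefore forces every optimal $\pi$ to equal $(\text{id}\times T)_\#\mu_1$.
\item Suppose $\bar\eta_2,\bar\eta_2'$ are two minimizers, with maps $T,T'$. The set $\{\eta\leq_{\text{cx}}\mu_2\}$ is convex, so the measure $\tfrac12(\bar\eta_2+\bar\eta_2')$ is admissible.
\item The plan $\tfrac12\big((\text{id}\times T)_\#\mu_1+(\text{id}\times T')_\#\mu_1\big)$ couples $\mu_1$ to this mixture. Apply the barycentric projection of (i) to it, i.e.\ use the map $\tfrac12(T+T')$, which remains admissible for the same reason as in (i).
\item Strict convexity of $|\cdot|^p$ then yields a strictly smaller cost unless $T=T'$ $\mu_1$-a.e.
\end{enumerate}
This gives uniqueness of both $\bar\eta_2$ and $T$. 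The delicate points I expect are Strassen's theorem in $\RR^d$ and the measurable gluing in Step 2, together with verifying the strictness in (v).
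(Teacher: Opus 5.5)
The paper gives no proof of this statement and simply quotes it from Gozlan--Juillet and Alfonsi--Corbetta--Jourdain. Your proposal is correct and is essentially the standard argument from those sources: barycentric projection plus Strassen gluing for the equality, compactness and closedness of $\{\eta\leq_{\text{cx}}\mu_2\}$ for existence, and strict convexity of $|\cdot|^p$ applied to the barycentric projection of a mixture for uniqueness. Uniqueness of the map $T$ for fixed $\bar\eta_2$ is exactly the case $\bar\eta_2=\bar\eta_2'$ of your steps (iii)--(v), combined with (ii), which shows that every optimal plan for $W_p(\mu_1,\bar\eta_2)$ is induced by its own barycentric map.
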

Remarkably, the previous theorem holds without any assumption on $\mu_1$, especially $\mu_1$ does not need to be absolutely continuous or not charge small sets (see also \cite[Rmk 1.2]{gozlan2017}).
Theorem \ref{weakforwardprojthm} shows that the transport described by $\mathcal T_p$ happens in two steps: First, the $p$-Wasserstein optimal map $T$ (also called Brenier map) transports $\mu_1$ to the opimizer $\bar\eta_2$ in 
Theorem \ref{weakforwardprojthm}. Second, $\bar\eta_2$ and $\mu_2$ are linked through a martingale coupling, due to Strassen's theorem \cite{strassen1965}, which states that there exists a martingale coupling between two probability measures $\eta_2$ and $\mu_2$ if and only if $\eta_2\leq_{\text{cx}}\mu_2$.\\
Theorem \ref{weakforwardprojthm} frames the weak optimal transport problem as a backward projection (because $\eta_2\leq_{\text{cx}}\mu_2$), but Alfonsi, Corbetta and Jourdain \cite{alfonsi2020} showed that $\mathcal T_p$ can also be understood as a forward projection (see also \cite[Sec 8]{kim2024} for the relationship of backward and forward projections for the convex order):
\begin{thm}\cite[Thm 4.1, Cor 4.4]{alfonsi2020}\label{weakbackwardprojthm}
	Let $p\in (1,\infty)$ and $\mu_1,\mu_2\in\mathcal P(\Omega)$. Then, there exists $\bar\eta_1\geq_{\text{cx}}\mu_1$ such that
	\begin{gather*}
		\mathcal T_p(\mu_2|\mu_1)=\inf_{\mu_1\leq_{\text{cx}}\eta_1}W_p^p(\eta_1,\mu_2)=W_p^p(\bar\eta_1,\mu_2).
	\end{gather*}
	The minimizer $\bar\eta_1$ is unique if $\mu_2\in\mathcal P_{\text{ac}}(\Omega)\coloneqq\big\{\mu\in\mathcal P(\Omega):\mu<<\mathcal L^d\}$.
\end{thm}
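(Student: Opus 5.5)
We prove the two inequalities $\mathcal T_p(\mu_2|\mu_1)\le \inf_{\eta_1\geq_{\text{cx}}\mu_1}W_p^p(\eta_1,\mu_2)$ and its converse, then handle existence and uniqueness. All measures live on the compact convex set $\Omega$, so convex order is preserved inside $\Omega$ and compactness arguments are available throughout.

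For ``$\le$'', fix $\eta_1\geq_{\text{cx}}\mu_1$ and let $\pi\in\Pi(\eta_1,\mu_2)$ be $W_p$-optimal. By Strassen's theorem there is a martingale coupling $m\in\Pi(\mu_1,\eta_1)$ with $\int z\,\dd m_x(z)=x$. Compose: let $\gamma_x:=\int \pi_z\,\dd m_x(z)$, which gives $\gamma\in\Pi(\mu_1,\mu_2)$. Then $x-\EE[\gamma_x]=\int (z-\EE[\pi_z])\,\dd m_x(z)$. Jensen's inequality applied twice (for $|\cdot|^p$) gives
\begin{gather*}
\int_\Omega|x-\EE[\gamma_x]|^p\,\dd\mu_1(x)\le \int_{\Omega^2}|z-\EE[\pi_z]|^p\,\dd m(x,z)=\int_\Omega |z-\EE[\pi_z]|^p\,\dd\eta_1(z)\le W_p^p(\eta_1,\mu_2).
\end{gather*}

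For ``$\ge$'', take an optimal $\gamma$ for $\mathcal T_p(\mu_2|\mu_1)$; it exists by lower semicontinuity and convexity of the barycentric cost together with compactness of $\Pi(\mu_1,\mu_2)$. Set $b(x):=\EE[\gamma_x]$. We need some $\eta_1\geq_{\text{cx}}\mu_1$ with $W_p^p(\eta_1,\mu_2)\le \int|x-b(x)|^p\,\dd\mu_1$. The natural candidate is to shift each kernel, $\eta_1:=\text{Law}(x+Y-b(x))$ with $Y\sim\gamma_x$, coupled with $Y$. This is a martingale in $x$, so $\eta_1\geq_{\text{cx}}\mu_1$, and the transport cost to $\mu_2$ is exactly $\int|x-b(x)|^p\,\dd\mu_1$. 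The main obstacle is that $\eta_1$ may leave $\Omega$. I would first work on $\RR^d$, or on a large ball, where the argument goes through verbatim. I would then attempt to project back using Theorem \ref{weakforwardprojthm} and the structure of the optimizer, namely that $b_\#\mu_1=\bar\eta_2$ and $b$ is the optimal map, which gives monotonicity relations. If this fails, I would instead interpret the infimum over $\eta_1\in\mathcal P(\RR^d)$ and check that restricting to $\Omega$ does not change the value.

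Existence of $\bar\eta_1$ follows because $\{\eta_1\geq_{\text{cx}}\mu_1\}$ is weakly closed and tight in this compact setting, and $W_p^p(\cdot,\mu_2)$ is weakly lower semicontinuous. For uniqueness when $\mu_2\in\mathcal P_{\text{ac}}(\Omega)$, note that $\{\eta_1\geq_{\text{cx}}\mu_1\}$ is convex and $\eta\mapsto W_p^p(\eta,\mu_2)$ is convex. Suppose two minimizers $\eta,\eta'$ exist. Since $\mu_2$ is absolutely continuous, the optimal plans from $\mu_2$ are induced by maps $S,S'$. Mixing the plans gives a plan for $\tfrac12(\eta+\eta')$ which is still optimal, so it must also be induced by a map; this forces $S=S'$ $\mu_2$-a.e. and hence $\eta=\eta'$.
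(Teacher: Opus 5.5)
\textbf{The gap is the step you flag yourself, and it cannot be closed in the paper's setting.} The paper gives no proof of this statement; it cites the $\RR^d$ result of \cite{alfonsi2020}. Three parts of your argument are correct:
\begin{itemize}
\item the ``$\le$'' direction (compose a martingale kernel $\mu_1\to\eta_1$ with an optimal plan and apply Jensen twice);
\item the shift construction;
\item the uniqueness argument (mix the two optimal maps from the absolutely continuous $\mu_2$ and use uniqueness of optimal plans for $|x-y|^p$, $p>1$).
\end{itemize}
Together these prove the $\RR^d$ statement. Neither of your fallbacks works, however. If $\eta_1$ is restricted to $\mathcal P(\Omega)$, as the paper's definition of $\leq_{\text{cx}}$ suggests, the identity is false.

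Take $d=1$, $\Omega=[0,1]$, $\mu_1=\delta_0$ and $\mu_2=\frac12(\delta_0+\delta_1)$. The only coupling is $\delta_0\otimes\mu_2$, so $\mathcal T_p(\mu_2|\mu_1)=|0-\tfrac12|^p=2^{-p}$. Every $\eta_1\geq_{\text{cx}}\delta_0$ has mean $0$, and on $[0,1]$ this forces $\eta_1=\delta_0$. Hence the infimum over $\mathcal P(\Omega)$ equals $W_p^p(\delta_0,\mu_2)=\frac12>2^{-p}$. Your shifted measure $\frac12(\delta_{-1/2}+\delta_{1/2})$ attains $2^{-p}$ but is not supported in $\Omega$. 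So restricting to $\Omega$ does change the value, and no projection back into $\Omega$ (via Theorem \ref{weakforwardprojthm} or otherwise) can repair this. Your opening remark that convex order stays inside $\Omega$ is true only for the \emph{smaller} measure.

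\textbf{How to make the argument complete.} Read the theorem with $\eta_1$ ranging over $\mathcal P_p(\RR^d)$, as in \cite{alfonsi2020}. Equivalently, let it range over $\mathcal P(\Omega+\Omega-\Omega)$: since $b(x)=\EE[\gamma_x]\in\Omega$ by convexity, the shifted point $x+y-b(x)$ lies in $\Omega+\Omega-\Omega$. With that reading your large-ball version is a full proof.

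Your existence step should be adjusted, since $\{\eta_1\geq_{\text{cx}}\mu_1\}\subset\mathcal P_p(\RR^d)$ is not tight. In fact no compactness is needed:
\begin{itemize}
\item Take an optimal $\gamma$, for instance $\gamma_x:=m_{T(x)}$ with $T$ from Theorem \ref{weakforwardprojthm} and $m$ a martingale kernel from $\bar\eta_2$ to $\mu_2$, so that $b=T$.
\item The shifted law $\bar\eta_1:=\text{Law}(X+Y-T(X))$, with $(X,Y)\sim\gamma$, satisfies $W_p^p(\bar\eta_1,\mu_2)\le\mathcal T_p(\mu_2|\mu_1)$.
\item Your ``$\le$'' direction shows that no $\eta_1\geq_{\text{cx}}\mu_1$ does better, so $\bar\eta_1$ is itself a minimizer.
\end{itemize}
Your uniqueness argument then carries over verbatim to $\mathcal P_p(\RR^d)$, with Gangbo--McCann supplying the map structure for absolutely continuous $\mu_2$.
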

$\mathcal T_p$ also has a dual problem, which is similar to the standard Kantorovich dual for $W_p$, but only admits concave competitors:
\begin{thm}\cite[Thm 2.11(3)]{gozlan2017}\label{weakdualitythm} Let $p\in(1,\infty)$ and $\mu_1,\mu_2\in\mathcal P(\Omega)$. Then
	\begin{gather*}
		\mathcal T_p(\mu_2|\mu_1)=\sup_{\varphi:\Omega\rightarrow\RR\text{ concave}}\int_{\Omega} \varphi^{c_p}(x_1)\ \dd\mu_1(x_1)+\int_{\Omega}\varphi(x_2)\ \dd\mu_2(x_2)
	\end{gather*}
	where
	\begin{gather*}
		\varphi^{c_p}(x_1)\coloneqq \inf_{x_2\in\Omega}\Big\{ |x_1-x_2|^p-\varphi(x_2)\Big\}.
	\end{gather*}
\end{thm}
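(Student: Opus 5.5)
We prove the duality of Theorem \ref{weakdualitythm} by going through the backward projection of Theorem \ref{weakforwardprojthm}, $\mathcal T_p(\mu_2|\mu_1)=\inf_{\eta_2\leq_{\text{cx}}\mu_2}W_p^p(\mu_1,\eta_2)$, and dualizing both the Wasserstein cost and the convex-order constraint.

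\emph{Weak duality.} Let $\varphi$ be concave and $\gamma\in\Pi(\mu_1,\mu_2)$, with disintegration $\gamma_{x_1}$. By definition of $\varphi^{c_p}$ applied at the point $x_2=\EE[\gamma_{x_1}]\in\Omega$ (this uses convexity of $\Omega$), and by Jensen's inequality for the concave $\varphi$,
\begin{gather*}
\varphi^{c_p}(x_1)\leq |x_1-\EE[\gamma_{x_1}]|^p-\varphi(\EE[\gamma_{x_1}])\leq |x_1-\EE[\gamma_{x_1}]|^p-\int_\Omega\varphi\ \dd\gamma_{x_1}.
\end{gather*}
Integrating against $\mu_1$ gives
\begin{gather*}
\int\varphi^{c_p}\,\dd\mu_1+\int\varphi\,\dd\mu_2\leq\int_\Omega c_p(x_1,\gamma_{x_1})\,\dd\mu_1(x_1).
\end{gather*}
Taking the infimum over $\gamma$ and the supremum over $\varphi$ yields ``$\geq$'' in the theorem. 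One should check measurability of $\varphi^{c_p}$; it is an infimum of continuous functions, hence upper semicontinuous. We may also assume $\varphi$ is upper semicontinuous by replacing it with its concave usc envelope, which only increases the dual objective.

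\emph{Reverse inequality.} We write the primal as a convex minimization. The map $\eta\mapsto W_p^p(\mu_1,\eta)$ is convex and weakly lower semicontinuous on $\mathcal P(\Omega)$. The constraint set $K=\{\eta\leq_{\text{cx}}\mu_2\}$ is convex and weakly compact, since $\Omega$ is compact. Its indicator satisfies
\begin{gather*}
\iota_K(\eta)=\sup_{\varphi\ \text{concave, continuous}}\Big(\int\varphi\,\dd\mu_2-\int\varphi\,\dd\eta\Big).
\end{gather*}
Using $-\varphi$ convex, this expresses $\eta\leq_{\text{cx}}\mu_2$ by testing against continuous convex functions, which suffice by density on compact $\Omega$. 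Hence
\begin{gather*}
\mathcal T_p=\inf_{\eta\in\mathcal P(\Omega)}\ \sup_{\varphi}\Big\{W_p^p(\mu_1,\eta)-\int\varphi\,\dd\eta+\int\varphi\,\dd\mu_2\Big\}.
\end{gather*}
The function inside is convex and lsc in $\eta$ on the compact convex set $\mathcal P(\Omega)$, and affine in $\varphi$ over the convex cone of continuous concave functions. Sion's minimax theorem therefore allows us to swap inf and sup.

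For fixed $\varphi$, the inner problem is $\inf_\eta\{W_p^p(\mu_1,\eta)-\int\varphi\,\dd\eta\}$. Write $W_p^p(\mu_1,\eta)=\inf_{\pi\in\Pi(\mu_1,\eta)}\int|x_1-x_2|^p\,\dd\pi$ and optimize jointly over $\eta$ and $\pi$, which amounts to optimizing over all $\pi$ with first marginal $\mu_1$. The infimum is then achieved pointwise by choosing $x_2$ to minimize $|x_1-x_2|^p-\varphi(x_2)$, giving exactly $\int\varphi^{c_p}\,\dd\mu_1$. A measurable selection of minimizers exists by compactness and lower semicontinuity of the cost $|x_1-x_2|^p-\varphi(x_2)$ for usc $\varphi$. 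Combining, we get $\mathcal T_p=\sup_\varphi\int\varphi^{c_p}\,\dd\mu_1+\int\varphi\,\dd\mu_2$.

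\emph{Main obstacle.} The delicate step is justifying the minimax exchange together with the representation of $\iota_K$. We need lower semicontinuity of the whole objective in $\eta$, which holds because each term $\int\varphi\,\dd\eta$ is continuous for continuous $\varphi$. We also need to handle concave $\varphi$ that are merely usc or unbounded at the boundary of $\Omega$. I would first prove the statement restricting $\varphi$ to continuous concave functions, where Sion applies cleanly. The general case follows because enlarging the class of $\varphi$ can only increase the supremum, and weak duality already caps it at $\mathcal T_p$.
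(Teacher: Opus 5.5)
Your proof is correct. The paper itself does not prove this statement; it quotes it from \cite[Thm 2.11(3)]{gozlan2017}. There it is a special case of a general Kantorovich duality for weak costs $c(x,p)$ that are convex and lower semicontinuous in $p$. For the barycentric cost, the dual transform only sees the convex envelope of the potential, so the supremum may be restricted to convex potentials (concave in the sign convention used here).

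Your route is different:
\begin{itemize}
\item you take the projection identity of Theorem \ref{weakforwardprojthm} as given;
\item you dualize only the constraint $\eta\leq_{\text{cx}}\mu_2$, using concave test functions;
\item you swap $\inf$ and $\sup$ by Sion, which is legitimate because $\mathcal P(\Omega)$ is weakly compact and convex, and the objective is convex and continuous in $\eta$ and affine and continuous in $\varphi$;
\item you solve the remaining transport problem with free second marginal pointwise through $\varphi^{c_p}$.
\end{itemize}
This is short, self-contained given the paper's preliminaries, and shows directly that continuous concave $\varphi$ suffice. It is essentially the mechanism the paper uses, via Fenchel--Rockafellar, in Theorem \ref{weakMMdualthm} and Theorem \ref{weakdualityN2thm}, where a concave potential encodes a convex-order constraint.

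What your route costs is logical order. For the reverse direction you need $\mathcal T_p(\mu_2|\mu_1)\leq\inf_{\eta_2\leq_{\text{cx}}\mu_2}W_p^p(\mu_1,\eta_2)$. That is precisely the half of Theorem \ref{weakforwardprojthm} requiring a martingale coupling of $\eta_2$ and $\mu_2$, i.e.\ Strassen's theorem. By contrast, \cite{gozlan2017} derive Strassen's theorem as a consequence of their duality. Moreover, your use of Sion relies on compactness of $\mathcal P(\Omega)$ and on the specific structure of $c_p$, which the general weak-transport duality does not need.

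One detail you should make explicit. Replacing $\varphi$ by its upper semicontinuous concave envelope $\bar\varphi$ does not lower the objective, because $\bar\varphi^{c_p}=\varphi^{c_p}$ (the cost $|x_1-x_2|^p$ is continuous) while $\bar\varphi\geq\varphi$. This step is genuinely needed rather than cosmetic. A finite concave function on, say, a closed disc may vanish in the interior and take arbitrary nonpositive values on the boundary circle, so it need not be Borel.
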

It turns out, that many concepts from classical optimal transport can be generalized to the weak setting: Backhoff-Veraguas, Beiglb{\"o}ck and Pammer \cite{backhoff2019} found an analog of cyclical monotonicity for weak optimal transport, and simultaneously Gozlan, Le Gouic, and Samson \cite{gozlan2025} and Guo, Nilsson, and Wiesel \cite{guo2026} found a dynamic characterization for $\mathcal T_2$ analogous to the celebrated Benamou-Brenier formula \cite{benamou2000} for $W_2$.\\
In this manuscript, we continue in the spirit of generalizing from $p$-Wasserstein to $\mathcal T_p$ and study the extension of the Wasserstein barycenter problem to the weak setting. 
The Wasserstein barycenter problem is given through
\begin{gather}\label{wassersteinmidpointproblem}
	\text{WC}(\mu_1,\dots,\mu_N)\coloneqq\inf_{\nu\in\mathcal P(\Omega)}\sum_{i=1}^N\lambda_i W_p^p(\mu_i,\nu),
\end{gather}
with $\mu_1,\dots\mu_N\in\mathcal P(\Omega)$ given probability measures and where $(\lambda_i)_{i\in[N]}\in\Delta_N$ where 
\begin{gather}\label{deltaN}
	\Delta_N\coloneqq\Big\{(\lambda_i)_{i\in[N]}\in(0,1)^N:\ \sum_{i=1}^N\lambda_i=1\Big\}
\end{gather}
and we define $[N]\coloneqq\{1,\dots,N\}$.
 This problem was introduced by Agueh and Carlier \cite{agueh2011} for $p=2$ and later extended to $p\in (1,\infty)$ by Brizzi, Friesecke, and Ried \cite{brizzi2025}. The minimizer $\bar\nu$ in \eqref{wassersteinmidpointproblem} can be understood as the barycenter or centroid of the probability measures $\mu_1,\dots,\mu_N$. To avoid confusion due to the repeated use of the word barycenter in different contexts, from now on we exclusively use the word barycenter for the expected value of the disintegration of a measure as in \eqref{weakbarycentriccost}, while we call the problem \eqref{wassersteinmidpointproblem} the \textit{Wasserstein centroid problem} and we refer to its minimizer $\bar\nu$ as the Wasserstein centroid of $\mu_1,\dots,\mu_N$.\\
The Wasserstein centroid problem \eqref{wassersteinmidpointproblem} is connected to a \textit{multi-marginal optimal transport problem} (see for example \cite{ruschendorf1981,gaffke1981,kellerer1984} for early references and \cite{pass2015} for an overview of multi-marginal optimal transport). Consider $\mu_1,\dots,\mu_N\in\mathcal P(\Omega)$ and the infimal convolution cost $\bar c\in C(\Omega^N)$ given through
\begin{gather}\label{infconvcost}
	\bar c(x_1,\dots,x_N)\coloneqq\inf_{z\in\Omega}\sum_{i=1}^N\lambda_i|x_i-z|^p,
\end{gather}
 where again $(\lambda_i)_{i\in [N]}\in\Delta_N$ from \eqref{deltaN}. We define the \textit{multi-marginal optimal transport problem with infimal convolution cost} through
 \begin{gather}\label{CMM}
 	\text{CMM}(\mu_1,\dots,\mu_N)\coloneqq\inf_{\gamma\in\Pi(\mu_1,\dots,\mu_N)}\int_{\Omega^N}\bar c(x_1,\dots,x_N)\ \dd\gamma(x_1,\dots,x_N)
 \end{gather}
 where $\Pi(\mu_1,\dots,\mu_N)$ is the set of probability measures on $\Omega^N$ with marginals $\mu_1,\dots,\mu_N$ and $\bar c$ as in \eqref{infconvcost}.
Chiappori, McCann and Nesheim \cite{chiappori2010} and Carlier and Ekeland \cite{carlier2010} discovered a connection between the Wasserstein centroid and the multi-marginal optimal transport problem with infimal convolution cost. To this end, we define the map
\begin{gather}\label{barTmap}
	\bar T:
		(x_1,\dots,x_N)\in\Omega^N\mapsto \arg\inf\bar c(x_1,\dots x_N)=\arg\inf\limits_{z\in\Omega}\sum\limits_{i=1}^N\lambda_i|x_i-z|^p\in\Omega.
\end{gather}
\begin{thm}\cite[Sec 5]{chiappori2010}\cite[Prop 3]{carlier2010}\label{chiapporithm}
	Let $p\in(1,\infty)$, $\mu_1,\dots,\mu_N\in\mathcal P(\Omega)$ and $(\lambda_i)_{i\in[N]}\in\Delta_N$ from \eqref{deltaN}. Then
	\begin{gather*}
		\text{WC}(\mu_1,\dots,\mu_N)=\text{CMM}(\mu_1,\dots,\mu_N).
	\end{gather*}
	If $\bar\gamma\in\Pi(\mu_1,\dots,\mu_N)$ minimizes \eqref{CMM} then $\bar\nu\coloneqq\bar T_{\#}\bar\gamma\in\mathcal P(\Omega)$ solves \eqref{wassersteinmidpointproblem} and if $\bar\nu\in\mathcal P(\Omega)$ minimizes \eqref{wassersteinmidpointproblem} there exists $\bar\gamma\in\Pi(\mu_1,\dots,\mu_N)$ minimizing \eqref{CMM} such that $\bar\nu=\bar T_{\#}\bar\gamma$, where $\bar T$ is defined in \eqref{barTmap}.
\end{thm}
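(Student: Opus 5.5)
The proof proceeds by two inequalities, with the map $\bar T$ from \eqref{barTmap} as the bridge. First we check that $\bar T$ is well defined and Borel. For $p>1$ the function $z\mapsto\sum_i\lambda_i|x_i-z|^p$ is strictly convex and continuous on the compact convex set $\Omega$. Hence the minimizer exists and is unique. By a standard argmin-continuity argument (uniqueness of the minimizer plus compactness), $\bar T$ is in fact continuous on $\Omega^N$. Moreover, $\bar c$ is continuous, so \eqref{CMM} attains its infimum: $\Pi(\mu_1,\dots,\mu_N)$ is weakly compact and $\gamma\mapsto\int\bar c\,\dd\gamma$ is weakly continuous. The same compactness, together with lower semicontinuity of $W_p^p$, gives existence of minimizers for \eqref{wassersteinmidpointproblem}.

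\emph{Step 1: $\text{WC}\le\text{CMM}$.} Let $\bar\gamma$ be optimal for \eqref{CMM} and set $\bar\nu=\bar T_\#\bar\gamma$. For each $i$, the measure $(\pi_i,\bar T)_\#\bar\gamma$ lies in $\Pi(\mu_i,\bar\nu)$. Therefore
\begin{gather*}
\sum_i\lambda_iW_p^p(\mu_i,\bar\nu)\le\int_{\Omega^N}\sum_i\lambda_i|x_i-\bar T(x)|^p\,\dd\bar\gamma(x)=\int_{\Omega^N}\bar c\,\dd\bar\gamma=\text{CMM}.
\end{gather*}
This shows $\text{WC}\le\text{CMM}$. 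Once equality is established, it also shows that $\bar\nu$ solves \eqref{wassersteinmidpointproblem}.

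\emph{Step 2: $\text{CMM}\le\text{WC}$ (gluing).} Take any $\nu$, in particular a minimizer $\bar\nu$, and choose optimal plans $\pi_i\in\Pi(\mu_i,\nu)$. Disintegrate each plan with respect to its $\nu$-marginal as $\pi_i=\int\pi_i^z\otimes\delta_z\,\dd\nu(z)$. Glue them into
\begin{gather*}
\Gamma\coloneqq\int_\Omega \pi_1^z\otimes\cdots\otimes\pi_N^z\otimes\delta_z\,\dd\nu(z)\in\mathcal P(\Omega^{N+1}).
\end{gather*}
Its first $N$ marginals are $\mu_1,\dots,\mu_N$ and its last marginal is $\nu$. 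Let $\gamma$ be the projection of $\Gamma$ onto the first $N$ coordinates. Since $\bar c(x)\le\sum_i\lambda_i|x_i-z|^p$ for every $z$,
\begin{gather*}
\text{CMM}\le\int\bar c\,\dd\gamma\le\int\sum_i\lambda_i|x_i-z|^p\,\dd\Gamma=\sum_i\lambda_iW_p^p(\mu_i,\nu).
\end{gather*}
Taking the infimum over $\nu$ gives equality of the two problems.

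\emph{Step 3: recovering $\bar\nu$ from $\bar\gamma$ (main obstacle).} This is the only delicate point. If $\nu=\bar\nu$ is optimal, then every inequality in Step 2 is an equality, so $\gamma$ is optimal for \eqref{CMM}. Equality in the pointwise bound, $\int\big(\sum_i\lambda_i|x_i-z|^p-\bar c(x)\big)\,\dd\Gamma=0$, means that the nonnegative integrand vanishes $\Gamma$-a.e. By uniqueness of the minimizer, this gives $z=\bar T(x_1,\dots,x_N)$ for $\Gamma$-a.e. $(x,z)$. Consequently $\Gamma=(\text{id},\bar T)_\#\gamma$, and taking the last marginal yields $\bar\nu=\bar T_\#\gamma$. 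This is exactly the second claimed direction, with $\bar\gamma\coloneqq\gamma$. The care needed here is only measure-theoretic: the existence of the disintegrations (automatic on compact Polish spaces) and the a.e. argument just given, which relies on the strict convexity coming from $p>1$.
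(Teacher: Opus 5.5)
Your proof is correct and complete. Note that the paper never proves this theorem: it is quoted from \cite{chiappori2010} and \cite{carlier2010} and then used as a black box, most importantly in Proposition \ref{midpoint=MMprop}. So there is no internal argument to compare yours against.

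What you give is the standard self-contained primal argument, and each step holds:
\begin{itemize}
\item Pushing an optimal $\bar\gamma$ forward by $\bar T$ gives $\text{WC}\le\text{CMM}$.
\item Gluing optimal plans along a common $\nu$, via disintegration, gives $\text{CMM}\le\text{WC}$.
\item For the converse correspondence, equality in the chain forces the nonnegative continuous integrand $\sum_i\lambda_i|x_i-z|^p-\bar c(x)$ to vanish $\Gamma$-a.e. Since $z\in\Omega$ there, uniqueness of the minimizer in \eqref{barTmap} identifies $\Gamma=(\text{id},\bar T)_\#\gamma$.
\end{itemize}

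Uniqueness is where strict convexity of $|\cdot|^p$ for $p>1$ enters, together with $\lambda_i>0$ from \eqref{deltaN}. Continuity of $\bar T$ is more than you need; Borel measurability suffices.

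One advantage of your route is that it needs no duality. It produces the minimizer correspondence directly, and that correspondence is exactly what the paper relies on when it transfers minimizers in Proposition \ref{midpoint=MMprop}.

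The only blemish is notational. In Step 2 you reuse $\pi_i$ for the optimal plans, after using it (as the paper does) for the coordinate projections in Step 1. Renaming the plans, e.g. to $\gamma_i$, would avoid the clash.
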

Problem $\eqref{CMM}$ also admits a dual problem, given through
\begin{gather}\label{CD}
	\text{CD}(\mu_1,\dots,\mu_N)\coloneqq \sup_{\substack{(\varphi_i)_{i\in[N]}\in C(\Omega)^N\\
	\sum_{i=1}^N\varphi_i(x_i)\leq \bar c(x_1,\dots,x_N)	
}}\sum_{i=1}^N\int_\Omega\varphi_i(x_i)\ \dd\mu_i(x_i)
\end{gather}
and then
\begin{gather*}
	\text{CMM}(\mu_1,\dots,\mu_N)=\text{CD}(\mu_1,\dots,\mu_N)
\end{gather*}
holds.\\
In this manuscript, we study the generalization of the Wasserstein centroid problem \eqref{wassersteinmidpointproblem} to the weak optimal transport problem with barycentric cost $\mathcal T_p$ for $p\in(1,\infty)$, given through
\begin{gather}\label{weakmidpointproblem}
	\text{wC}(\mu_1,\dots,\mu_N)\coloneqq\inf_{\nu\in\mathcal P(\Omega)}\sum_{i=M+1}^N\mathcal \lambda_i \mathcal T_p(\mu_i|\nu)+\sum_{i=1}^M\lambda_i\mathcal T_p(\nu|\mu_i),
\end{gather}
with $\mu_1,\dots,\mu_N\in\mathcal P(\Omega)$ and $(\lambda_i)_{i\in[N]}\in\Delta_N$ from \eqref{deltaN}. We assume that $0\leq M\leq N$ are integers. The separation between the first $M$ and the last $N-M$ indices is necessary, as $\mathcal T_p$ (unlike $W_p$) is asymmetric in its two arguments and in general $\mathcal T_p(\mu_1|\mu_2)\neq\mathcal T_p(\mu_2|\mu_1)$. We call \eqref{weakmidpointproblem} the \textit{weak centroid problem} between $\mu_1,\dots,\mu_N$ and its solution $\bar\nu\in\mathcal P(\Omega)$ a \textit{weak centroid} of $\mu_1,\dots,\mu_N$. To the best of our knowledge, \eqref{weakmidpointproblem} has not been studied in this generality. Cazelles, Tobar and Fontbona \cite{cazelles2021} have introduced \eqref{weakmidpointproblem} in the special case $M=0$ for $p=2$, but their work is mainly focussed on computing a weak centroid $\bar\nu$ algorithmically, instead of analyzing its theoretical properties.

\subsection{Notation, contributions and plan of the manuscript}
In this manuscript, we develop a theory for the weak centroid problem \eqref{weakmidpointproblem}. Section 2 of this manuscript shows existence of a minimizer $\bar\nu\in\mathcal P(\Omega)$ for \eqref{weakmidpointproblem} and comments on the cases $M=0$ and $M=N$. We especially discuss non-uniqueness of minimizers of \eqref{weakmidpointproblem} after Proposition \ref{zeroclassprop}.\\
Section 3 forms the core of our work: We define the multi-marginal optimal transport problem 
\begin{gather}\label{weakMM}
	\text{wMM}(\mu_1,\dots,\mu_N)\coloneqq\inf_{\gamma\in\Pi\big((\mu_{i\leq_{\text{cx}}})_{i=1}^M,(_{\leq_{\text{cx}}}\mu_i)_{i=M+1}^N\big)}\int_{\Omega^N}\bar c\ \dd\gamma
\end{gather}
with $\bar c$ from \eqref{infconvcost} and where for $\mu_1,\dots,\mu_N\in\mathcal P(\Omega)$ we define the set
\begin{equation}\label{Pmargset}
	\begin{gathered}
		\Pi\big((\mu_{i\leq_{\text{cx}}})_{i=1}^M,(_{\leq_{\text{cx}}}\mu_i)_{i=M+1}^N\big)\coloneqq\\
		\Big\{\gamma\in\mathcal P(\Omega^N):\mu_i\leq_{\text{cx}}\pi_{i\#}\gamma,\ i=1,\dots,M;\ \pi_{i\#}\gamma\leq_{\text{cx}}\mu_i,\ i=M+1,\dots,N\Big\}.
	\end{gathered}
\end{equation}
Here $\pi_i:\Omega^N\rightarrow\Omega$ given through $\pi_i(x_1,\dots,x_N)=x_i$ for $i\in[N]$ defines the projection on the $i$-th factor.\\
In Proposition \ref{midpoint=MMprop} we show that
\begin{gather*}
	\text{wC}(\mu_1,\dots,\mu_N)=\text{wMM}(\mu_1,\dots,\mu_N)
\end{gather*}
which is the analog of Theorem \ref{chiapporithm} for the the weak centroid problem.
Noticing, that 
\begin{gather*}
	\text{wMM}(\mu_1,\dots,\mu_N)=\inf_{\begin{subarray}{l}\mu_i\leq_{\text{cx}}\eta_i\ i=1,\dots,M\\
			\eta_i\leq_{\text{cx}}\mu_i\ i=M+1,\dots,N	\end{subarray}}\text{CMM}(\eta_1,\dots,\eta_N)
\end{gather*}
allows us to relate minimizers $\bar\nu$ of \eqref{weakmidpointproblem} to minimizers $\bar\gamma$ of \eqref{weakMM}, using Theorem \ref{chiapporithm}.\\
Multi-marginal optimal transport problems have seen some success due to their versatility in applications \cite{pass2015}, for example in modelling labour markets \cite{chiappori2010,carlier2010} or in quantum physics \cite{cotar2018}, and therefore we believe that problem \eqref{weakMM} is interesting in its own right.
To the best of our knowledge, the weak optimal transport problem has not been studied in a multi-marginal context, however, \eqref{weakMM} formulates a natural multi-marginal version for the weak optimal transport problem with barycentric cost. It encodes the inherent asymmetry of the weak optimal transport problem by requiring that each of the first $M$ marginals of $\gamma$ dominate the respective $\mu_i$ in convex order, while each of the last $N-M$ marginals of $\gamma$ are dominated by the respective $\mu_i$ in convex order.\\
Next, we find a dual problem for $\text{wMM}(\mu_1,\dots,\mu_N)$. We consider
\begin{gather}\label{weakMMdual}
	\text{wD}(\mu_1,\dots,\mu_N)\coloneqq\sup_{(\varphi_i)_{i\in[N]}\in\mathcal A_{\bar c}}\sum_{i=1}^N\int_{\Omega}\varphi_i(x_i)\ \dd\mu_i(x_i)
\end{gather}
with $\bar c$ from \eqref{infconvcost} and where we define
\begin{equation}\label{A_cset}
	\begin{gathered}
		\mathcal A_{\bar c}\coloneqq\Big\{(\varphi_i)_{i\in[N]}\in C(\Omega)^N: \sum_{i=1}^N\varphi_i(x_i)\leq \bar c(x_1,\dots,x_N),\ (x_1,\dots,x_N)\in\Omega^N;\\ \varphi_i\text{ convex for }i=1,\dots,M,\ \varphi_i\text{ concave for }i=M+1,\dots,N\Big\}.
	\end{gathered}
\end{equation}
The problem \eqref{weakMMdual} might not admit continuous, real-valued maximizers (see Remark \ref{attainmentlmarmk}), hence we consider the relaxed problem
\begin{gather}\label{weakMMdualrelaxed}
	\text{wDr}(\mu_1,\dots,\mu_N)\coloneqq\sup_{(\varphi_i)_{i\in[N]}\in\mathcal B_{\bar c}}\sum_{i=1}^M\int_{\Omega}\varphi_i\  \dd\mu_i+\sum_{i=M+1}^N\int_{K_i}\varphi_i\ \dd\mu_i
\end{gather}
with $\bar c$ from \eqref{infconvcost}, where we define $K_i\coloneqq\text{conv}(\text{supp}(\mu_i))$ for $i=M+1,\dots,N$ and the set
\begin{equation}\label{B_cset}
	\begin{gathered}
		\mathcal B_{\bar c}\coloneqq\Big\{(\varphi_i)_{i\in[N]}:\\
		\sum_{i=1}^N\varphi_i(x_i)\leq c(x_1,\dots,x_N),\ (x_1,\dots,x_N)\in\Omega^M\times K_{M+1}\times\dots\times K_N;\\ \varphi_i:\Omega\rightarrow\RR\text{ convex, continuous }\forall\ i=1,\dots,M,\\
		\varphi_i:K_i\rightarrow \RR\cup\{-\infty\}\in L^1(\mu_i)\text{ concave, upper-semicontinuous }\forall\ i=M+1,\dots,N\Big\}.
	\end{gathered}
\end{equation}
In Theorem \ref{weakMMdualthm} we show that
\begin{gather*}
	\text{wMM}(\mu_1,\dots,\mu_N)=\text{wD}(\mu_1,\dots,\mu_N)=\text{wDr}(\mu_1,\dots,\mu_N)
\end{gather*}
and Lemma \ref{dualattainmentlma} shows that $\text{wDr}(\mu_1,\dots,\mu_N)$ admits maximizers.\\
In the classical multi-marginal problem \eqref{CMM} the marginals of the couplings are fixed and the potentials in the dual problem \eqref{CD} are only continuous. For our problem \eqref{weakMM} on the other hand, we only prescribe a convex order relationship on the marginals of the couplings, which translates to requiring in the dual problem \eqref{weakMMdual} that the potentials are convex or concave, respectively.\\
Using the connection with the classical dual problem \eqref{CD}, we establish an optimality condition, relating optimizers $\bar\gamma$ of \eqref{weakMM} to optimizers $(\bar\varphi_i)_{i\in[N]}$ of \eqref{weakMMdual} in Lemma \ref{weakMMoptimality}.\\
Section 4 discusses the special case $N=2$ and $M=1$ and therefore studies the problem
\begin{gather}\label{weakmidpointproblemN2intro}
	\text{wC}(\mu_1,\mu_2)=\inf_{\nu\in\mathcal P(\Omega)}t\mathcal T_p(\mu_2|\nu)+(1-t)\mathcal T_p(\nu|\mu_1)
\end{gather}
for $t\in[0,1]$. It turns out, that for $\mu_1$ absolutely continuous, an optimizer $\bar\nu$ of \eqref{weakmidpointproblemN2intro} is given through McCann's displacement interpolant \cite{mccann1997} between $\mu_1$ and $\bar\eta_2\leq_{\text{cx}}\mu_2$ which realizes the projection in the term
\begin{gather*}
	\mathcal T_p(\mu_2|\bar\nu)=\inf_{\eta_2\leq_{\text{cx}}\mu_2}W_p^p(\eta_2,\bar\nu)
\end{gather*}
in \eqref{weakmidpointproblemN2intro}.
Using this insight, we relate \eqref{weakmidpointproblemN2intro} with $p=2$ to the dynamical version for $\mathcal T_2$ from \cite{gozlan2025} and \cite{guo2026} in Theorem \ref{weakBBrelthm}.

\section{Basics about the weak centroid problem}
We fix a compact and convex set $\Omega\subset\RR^d$ and integers $0\leq M\leq N$ for $N>0$. We expect all of our results to also hold if $\Omega=\RR^d$ for $\mu_1,\dots,\mu_N\in\mathcal P_p(\RR^d)\coloneqq\big\{\mu\in\mathcal P(\RR^d):\int_{\RR^d}|x|^p\ \dd\mu(x)<\infty\}$, but for simplicity we will only work on a compact set $\Omega$. During this manuscript, $C$ wil denote a generic constant in $\RR$.
\begin{lma}
	Let $p\in (1,\infty)$,  $\mu_1,\dots,\mu_N\in\mathcal P(\Omega)$ and $(\lambda_i)_{i\in[N]}\in\Delta_N$ from \eqref{deltaN}. Then \eqref{weakmidpointproblem} admits a minimizer $\bar\nu\in\mathcal P(\Omega)$.
\end{lma}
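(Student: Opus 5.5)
We will prove existence via the direct method of the calculus of variations on the compact space $\mathcal P(\Omega)$ with the weak topology. Since $\Omega$ is compact, $\mathcal P(\Omega)$ is weakly compact (Prokhorov), and the functional
\begin{gather*}
	F(\nu)\coloneqq\sum_{i=M+1}^N\lambda_i\mathcal T_p(\mu_i|\nu)+\sum_{i=1}^M\lambda_i\mathcal T_p(\nu|\mu_i)
\end{gather*}
is finite and bounded below by $0$, since all costs are bounded by $\text{diam}(\Omega)^p$. So the statement reduces to proving that $F$ is weakly lower semicontinuous. It suffices to show that each of the two maps $\nu\mapsto\mathcal T_p(\mu|\nu)$ and $\nu\mapsto\mathcal T_p(\nu|\mu)$ is weakly lower semicontinuous for fixed $\mu\in\mathcal P(\Omega)$.

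For the map $\nu\mapsto\mathcal T_p(\nu|\mu)$ we use Theorem \ref{weakforwardprojthm}:
\begin{gather*}
	\mathcal T_p(\nu|\mu)=\inf_{\eta\leq_{\text{cx}}\nu}W_p^p(\mu,\eta).
\end{gather*}
Take $\nu_n\rightharpoonup\nu$ and pass to a subsequence realizing the $\liminf$. Let $\eta_n\leq_{\text{cx}}\nu_n$ be the optimizers from Theorem \ref{weakforwardprojthm}. By compactness we extract $\eta_n\rightharpoonup\eta$. The convex order passes to weak limits: for convex $\varphi$ restricted to the compact convex $\Omega$, $\varphi$ is bounded above, and we can approximate by continuous convex functions (or use $\varphi$ continuous on the interior, or test with continuous convex functions only, which suffices on $\Omega$). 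Hence $\eta\leq_{\text{cx}}\nu$. Since $W_p$ is weakly continuous on $\mathcal P(\Omega)$, we get
\begin{gather*}
	\mathcal T_p(\nu|\mu)\leq W_p^p(\mu,\eta)=\lim_n W_p^p(\mu,\eta_n)=\liminf_n\mathcal T_p(\nu_n|\mu).
\end{gather*}
For the map $\nu\mapsto\mathcal T_p(\mu|\nu)$ we argue the same way via Theorem \ref{weakbackwardprojthm}, namely $\mathcal T_p(\mu|\nu)=\inf_{\nu\leq_{\text{cx}}\eta}W_p^p(\eta,\mu)$. We extract optimizers $\eta_n\geq_{\text{cx}}\nu_n$ and a limit $\eta$, pass the convex order $\nu\leq_{\text{cx}}\eta$ to the limit, and use continuity of $W_p$. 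Alternatively, one could use the duality in Theorem \ref{weakdualitythm}, which writes $\nu\mapsto\mathcal T_p(\mu|\nu)$ as a supremum of maps $\nu\mapsto\int\varphi^{c_p}\,\dd\nu+\text{const}$ with $\varphi^{c_p}$ continuous (an infimum of equicontinuous functions on compact $\Omega$), hence weakly continuous. A supremum of continuous functions is lower semicontinuous.

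The main technical point is the stability of the convex order under weak convergence. We handle it by noting that on compact convex $\Omega$ it is enough to test against continuous convex functions on $\RR^d$. An arbitrary convex $\varphi$ can be approximated from below by an increasing sequence of affine maxima, which are continuous, and monotone convergence then gives the inequality for $\varphi$. With lower semicontinuity of $F$ and compactness of $\mathcal P(\Omega)$, a minimizing sequence has a weakly convergent subsequence whose limit $\bar\nu$ attains the infimum, proving the lemma.
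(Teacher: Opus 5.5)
Your proof is correct provided the terms $\nu\mapsto\mathcal T_p(\mu_i|\nu)$, $i>M$, are handled by your duality argument. It takes a different route from the paper.

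\textbf{Comparison with the paper.} The paper extracts a weak limit of a minimizing sequence and cites \cite[Thm 1.5]{backhoff2020}, which gives full weak continuity of both $\nu\mapsto\mathcal T_p(\mu_i|\nu)$ and $\nu\mapsto\mathcal T_p(\nu|\mu_i)$. You prove only lower semicontinuity, which is all the direct method needs, and you use only Theorems \ref{weakforwardprojthm} and \ref{weakdualitythm}.

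For $\nu\mapsto\mathcal T_p(\nu|\mu)$ your compactness argument is sound. Testing $\eta_n\leq_{\text{cx}}\nu_n$ with the convex function $\mathrm{dist}(\cdot,\Omega)$ forces $\eta_n\in\mathcal P(\Omega)$. Finite convex functions on $\RR^d$ are continuous, so the order passes to weak limits directly, and your approximation by affine maxima is unnecessary.

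For $\nu\mapsto\mathcal T_p(\mu|\nu)$ the duality argument works. A real-valued concave $\varphi$ on compact convex $\Omega$ is bounded above, so $\varphi^{c_p}$ is a finite infimum of uniformly Lipschitz functions and hence continuous. A supremum of weakly continuous functionals is lower semicontinuous.

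The paper's proof is one line, but it rests on a nontrivial external stability theorem and proves more than needed. Yours is elementary and self-contained.

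\textbf{The backward-projection route fails as written.} The argument you list first for $\nu\mapsto\mathcal T_p(\mu|\nu)$, ``the same way via Theorem \ref{weakbackwardprojthm}'', does not work. The dominating measures $\eta\geq_{\text{cx}}\nu$ in the backward projection cannot in general be taken in $\mathcal P(\Omega)$, so ``by compactness'' is unjustified.

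For example, take $\Omega=[0,1]$, $\nu=\delta_0$, $\mu=\frac12(\delta_0+\delta_1)$. Then $\mathcal T_p(\mu|\delta_0)=2^{-p}$, attained by $\eta=\frac12(\delta_{-1/2}+\delta_{1/2})\notin\mathcal P(\Omega)$. The only $\eta\in\mathcal P([0,1])$ with $\eta\geq_{\text{cx}}\delta_0$ is $\delta_0$ itself, and $W_p^p(\delta_0,\mu)=\frac12>2^{-p}$. So the infimum in that theorem must range over measures on $\RR^d$.

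The route can be repaired in two steps.
\begin{enumerate}
\item If $(X,Y)$, with $X\sim\nu$ and $Y\sim\mu$, is optimal for $\mathcal T_p(\mu|\nu)$, then the law of $X+Y-\EE[Y|X]$ dominates $\nu$ and is optimal. It lives in the compact convex set $\Omega+\Omega-\Omega$, where your compactness argument goes through.
\item The inequality $\mathcal T_p(\mu|\nu)\leq W_p^p(\eta,\mu)$ for the limit $\eta\geq_{\text{cx}}\nu$ follows by gluing a martingale coupling of $(\nu,\eta)$ with an optimal $W_p$ coupling of $(\eta,\mu)$ and applying conditional Jensen.
\end{enumerate}
It is simpler, though, to make the duality argument your proof of that half.
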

We adapt the proof from \cite[Prop 1]{cazelles2021}:
\begin{proof}
	Let $(\nu_n)_{n\in\NN}\subset\mathcal P(\Omega)$ be a minimizing sequence of 
	\begin{gather*}
		\nu\mapsto F(\nu)\coloneqq \sum_{i=M+1}^N\mathcal \lambda_i \mathcal T_p(\mu_i|\nu)+\sum_{i=1}^M\lambda_i\mathcal T_p(\nu|\mu_i).
	\end{gather*}
	By Prokhorov's theorem, there exists a subsequence of $(\nu_n)_{n\in\NN}$ (still denoted by $(\nu_n)_{n\in\NN}$) converging weakly to some $\bar\nu\in\mathcal P(\Omega)$. \cite[Thm 1.5]{backhoff2020} shows that $\nu_n\rightarrow\bar\nu$ weakly implies $\mathcal T_p(\mu_i|\nu_n)\rightarrow\mathcal T_p(\mu_i|\bar\nu)$ and $\mathcal T_p(\nu_n|\mu_i)\rightarrow\mathcal T_p(\bar\nu|\mu_i)$ and hence
	\begin{gather*}
		\min_{\nu\in\mathcal P(\Omega)}F(\nu)=\lim_{n\rightarrow\infty}F(\nu_n)=F(\bar\nu).
	\end{gather*}
\end{proof}
For the rest of the manuscript, $\bar\nu\in\mathcal P(\Omega)$ denotes a minimizer of \eqref{weakmidpointproblem}.
\begin{prop}\label{zeroclassprop}
	Let $p\in(1,\infty)$, $\mu_1,\dots,\mu_N\in\mathcal P(\Omega)$ and 
	$(\lambda_i)_{i\in[N]}\in\Delta_N$ from \eqref{deltaN}. Then $\text{wC}(\mu_1,\dots,\mu_N)=0$ if and only if there exists $\bar\nu\in\mathcal P(\Omega)$ such that $\mu_1,\dots,\mu_M\leq_{\text{cx}}\bar\nu\leq_{\text{cx}}\mu_{M+1},\dots,\mu_{N}$. In this case $\bar\nu$ minimizes \eqref{weakmidpointproblem}.
\end{prop}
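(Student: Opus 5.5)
The plan is to reduce everything to the projection characterizations in Theorem \ref{weakforwardprojthm} and Theorem \ref{weakbackwardprojthm}. By Theorem \ref{weakforwardprojthm}, for any $\nu$ and $i\ge M+1$ we have $\mathcal T_p(\mu_i|\nu)=\inf_{\eta\leq_{\text{cx}}\mu_i}W_p^p(\nu,\eta)$, and the infimum is attained. By Theorem \ref{weakbackwardprojthm}, for $i\le M$ we have $\mathcal T_p(\nu|\mu_i)=\inf_{\mu_i\leq_{\text{cx}}\eta}W_p^p(\eta,\nu)$, again attained. Consequently each term of $F$ is nonnegative, and it vanishes if and only if the corresponding convex order relation holds for $\nu$.

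To justify the vanishing criterion, suppose $\nu\leq_{\text{cx}}\mu_i$ for some $i\ge M+1$. Then $\eta=\nu$ is an admissible competitor, so $\mathcal T_p(\mu_i|\nu)=0$. Conversely, suppose $\mathcal T_p(\mu_i|\nu)=0$. By attainment there is $\bar\eta\leq_{\text{cx}}\mu_i$ with $W_p(\nu,\bar\eta)=0$. Hence $\nu=\bar\eta\leq_{\text{cx}}\mu_i$. The case $i\le M$ is symmetric, using the attained minimizer $\bar\eta_1\geq_{\text{cx}}\mu_i$ from Theorem \ref{weakbackwardprojthm}.

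One can avoid attainment altogether. Take a sequence $\eta_n\leq_{\text{cx}}\mu_i$ with $W_p(\eta_n,\nu)\to0$. Then $\eta_n\to\nu$ weakly. Every convex $\varphi$ is continuous on the compact convex set $\Omega$ (a convex function on all of $\RR^d$ is continuous), so the inequality $\int\varphi\,\dd\eta_n\le\int\varphi\,\dd\mu_i$ passes to the limit. This closedness of the convex order under weak convergence is the only point requiring care, and it is mild.

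With this criterion the proposition follows directly. Since all $\lambda_i>0$ and all terms are nonnegative, $\text{wC}=0$ forces the minimizer $\bar\nu$ (which exists by the preceding Lemma) to satisfy $F(\bar\nu)=0$. Then every term vanishes, which gives $\mu_1,\dots,\mu_M\leq_{\text{cx}}\bar\nu\leq_{\text{cx}}\mu_{M+1},\dots,\mu_N$. Conversely, if such a $\bar\nu$ exists, every term of $F(\bar\nu)$ is zero. Then $0\le\text{wC}\le F(\bar\nu)=0$, so $\bar\nu$ attains the infimum and is a minimizer.
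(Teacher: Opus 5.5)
Your proof is correct and follows the paper's argument: each weighted term is nonnegative and vanishes exactly when the corresponding convex-order relation holds, and the existence of a minimizer turns $\text{wC}(\mu_1,\dots,\mu_N)=0$ into $F(\bar\nu)=0$; you simply justify, via the projection theorems, the vanishing criterion that the paper states without comment. A minor refinement: for $i\le M$ you can instead apply Theorem \ref{weakforwardprojthm} with the roles swapped, $\mathcal T_p(\nu|\mu_i)=\inf_{\eta\leq_{\text{cx}}\nu}W_p^p(\mu_i,\eta)$ (attained), which vanishes iff $\mu_i\leq_{\text{cx}}\nu$; this avoids the backward projection, whose competitors $\eta\geq_{\text{cx}}\mu_i$ need not be supported in $\Omega$, so your attainment-free remark (which tacitly takes $\eta_n\in\mathcal P(\Omega)$) would there need Lipschitz convex test functions.
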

\begin{proof}
	$\text{wC}(\mu_1,\dots,\mu_N)=0$ if and only if there exists $\bar\nu\in\mathcal P(\Omega)$ such that $\mathcal T_p(\mu_i|\bar\nu)=0$ for $i=M+1,\dots,N$ and $\mathcal T_p(\bar\nu|\mu_i)=0$ for $i=1,\dots,M$ which holds if and only if $\mu_1,\dots,\mu_M\leq_{\text{cx}}\bar\nu\leq_{\text{cx}}\mu_{M+1},\dots,\mu_{N}$.
\end{proof}
Proposition \ref{zeroclassprop} shows, that minimizers $\bar\nu$ of \eqref{weakmidpointproblem} are non-unique: Assume that
\begin{gather*}
	\mu_1\leq_{\text{cx}}\dots\leq_{\text{cx}}\mu_M\leq_{\text{cx}}\mu_{M+1}\leq_{\text{cx}}\dots\leq_{\text{cx}}\mu_{N}
\end{gather*}then every $\bar\nu\in\mathcal P(\Omega)$ that fulfills $\mu_M\leq_{\text{cx}}\bar\nu\leq_{\text{cx}}\mu_{M+1}$ minimizes \eqref{weakmidpointproblem} with\\ $\text{wB}(\mu_1,\dots,\mu_N)=0$.

\subsection{The one-sided weak centroid problems}
In this subsection, we discuss the special cases of the one-sided version of the weak centroid problem \eqref{weakmidpointproblem}, corresponding to $M=0$ and $M=N$.
We first discuss the case $M=0$, corresponding to $\eqref{weakmidpointproblem}$ taking the simplified form
\begin{gather}\label{M0}
	\inf_{\nu\in\mathcal P(\Omega)}\sum_{i=1}^N \lambda_i \mathcal T_p(\mu_i|\nu).
\end{gather}
This is precisely the setting discussed by Cazelles, Tobar and Fontbona \cite{cazelles2021} for $p=2$. They show in \cite[Lma 1]{cazelles2021}, that if $\bar\nu$ minimizes \eqref{M0}, then every $\nu\leq_{\text{cx}}\bar\nu$ minimizes $\eqref{M0}$ - especially $\delta_{\EE[\bar\nu]}$ is a minimizer. Combining this with Proposition \ref{zeroclassprop} we get the following:
\begin{prop}\label{M0zeroclassprop}
	Let $p\in(1,\infty)$, $\mu_1,\dots,\mu_N\in\mathcal P(\Omega)$, $(\lambda_i)_{i\in[N]}\in\Delta_N$ from \eqref{deltaN} and $M=0$. Then $\text{wC}(\mu_1,\dots,\mu_N)=0$ if and only if $\EE[\mu_1]=\dots=\EE[\mu_N]$.
\end{prop}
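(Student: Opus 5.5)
With $M=0$, Proposition \ref{zeroclassprop} says that $\text{wC}(\mu_1,\dots,\mu_N)=0$ if and only if there is some $\bar\nu\in\mathcal P(\Omega)$ with $\bar\nu\leq_{\text{cx}}\mu_i$ for all $i\in[N]$. The plan is to show that such a $\bar\nu$ exists exactly when all the means agree. Each direction then uses only the definition of the convex order.

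For the forward direction, assume $\text{wC}(\mu_1,\dots,\mu_N)=0$ and take $\bar\nu$ from Proposition \ref{zeroclassprop}, so that $\bar\nu\leq_{\text{cx}}\mu_i$ for every $i$. Both the coordinate functions $x\mapsto x^{(k)}$ and their negatives $x\mapsto -x^{(k)}$, $k=1,\dots,d$, are convex on $\RR^d$. Testing the convex order against both gives equality of integrals, hence $\EE[\bar\nu]=\EE[\mu_i]$ for each $i$. In particular $\EE[\mu_1]=\dots=\EE[\mu_N]$.

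For the converse, suppose all the means equal a common value $m$. Since $\Omega$ is convex, $m\in\Omega$, so $\delta_m\in\mathcal P(\Omega)$. Take $\bar\nu\coloneqq\delta_m$. For any convex $\varphi:\RR^d\rightarrow\RR$, Jensen's inequality gives
\begin{gather*}
\int_\Omega\varphi\ \dd\delta_m=\varphi(m)=\varphi\Big(\int_\Omega x\ \dd\mu_i(x)\Big)\leq\int_\Omega\varphi\ \dd\mu_i .
\end{gather*}
Hence $\delta_m\leq_{\text{cx}}\mu_i$ for all $i\in[N]$, and Proposition \ref{zeroclassprop} with $M=0$ yields $\text{wC}(\mu_1,\dots,\mu_N)=0$.

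There is no real obstacle here. The only point needing care is that the lower-bound condition in Proposition \ref{zeroclassprop} is vacuous when $M=0$. One could instead appeal to \cite[Lma 1]{cazelles2021}, but the direct Jensen argument above makes that unnecessary.
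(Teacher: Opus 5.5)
Your proof is correct. Like the paper, you use Proposition \ref{zeroclassprop} to reduce the claim to the existence of a common lower bound $\bar\nu\leq_{\text{cx}}\mu_i$, but you handle that condition differently.

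The paper combines Proposition \ref{zeroclassprop} with \cite[Lma 1]{cazelles2021}. That lemma says that for $M=0$, every $\nu\leq_{\text{cx}}\bar\nu$ of a minimizer $\bar\nu$ is again a minimizer, so in particular $\delta_{\EE[\bar\nu]}$ is one. You bypass the lemma altogether. You test the convex order against affine functions to get $\EE[\bar\nu]=\EE[\mu_i]$, and you use Jensen to see that $\delta_m\leq_{\text{cx}}\mu_i$ whenever $\EE[\mu_i]=m$.

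Your route is self-contained and works verbatim for every $p\in(1,\infty)$. The lemma the paper invokes is stated in \cite{cazelles2021} for $p=2$, so using it for general $p$ tacitly requires a short conditional-Jensen extension that the paper does not supply.

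What the paper's route gives in return is quantitative information. For $M=0$ a Dirac minimizer always exists, and $\mathcal T_p(\mu_i|\delta_z)=|z-\EE[\mu_i]|^p$. Hence $\text{wC}(\mu_1,\dots,\mu_N)=\min_{z\in\Omega}\sum_{i=1}^N\lambda_i|z-\EE[\mu_i]|^p$ for arbitrary $\mu_1,\dots,\mu_N$, and the proposition follows at once. Your argument only characterizes when the value vanishes.
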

For the case $M=N$, \eqref{weakmidpointproblem} takes the form
\begin{gather}\label{MN}
	\inf_{\nu\in\mathcal P(\Omega)}\sum_{i=1}^N\lambda_i\mathcal T_p(\nu|\mu_i).
\end{gather}
If $\Omega$ was equal to $\RR^d$, then for $\mu_1,\dots,\mu_N\in\mathcal P_1(\RR^d)$ with $m\coloneqq\EE[\mu_1]=\dots=\EE[\mu_N]$ we could construct the random variable $Z\coloneqq\sum_{i=1}^NX_i-(N-1)m$ where $X_i\sim\mu_i$ are independent random variables. Then $\EE[Z|X_i]=X_i$ for every $i\in[N]$ which by Strassen's theorem implies that $\text{law}(Z)\geq_{\text{cx}}\mu_i$ for every $i\in[N]$. Therefore, we would get an analog of Proposition \ref{M0zeroclassprop} stating that \eqref{MN} is zero if and only if $\EE[\mu_1]=\dots=\EE[\mu_N]$.\\
However, as $\Omega$ is compact, there might not always be a measure, that is supported on $\Omega$ and higher in convex order than two given measures. This can be seen from the example $d=2$ with $\Omega$ the closed unit disc and probability measures $\mu_1=\frac{1}{2}(\delta_{e_1}+\delta_{-e_1})$ and $\mu_2=\frac{1}{2}(\delta_{e_2}+\delta_{-e_2})$, which do not admit a common higher element in convex order that is supported on $\Omega$.
\section{Multi-marginal formulation and duality}
\begin{lma}
	Let $p\in(1,\infty)$, $\mu_1,\dots,\mu_N\in\mathcal P(\Omega)$ and $(\lambda_i)_{i\in[N]}\in\Delta_N$ from \eqref{deltaN}. Then
	\eqref{weakMM} admits a minimizer $\bar\gamma\in \Pi\big((\mu_{i\leq_{\text{cx}}})_{i=1}^M,(_{\leq_{\text{cx}}}\mu_i)_{i=M+1}^N\big)$ from \eqref{Pmargset}.
\end{lma}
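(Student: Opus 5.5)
We argue by the direct method, exactly as for the existence of a minimizer of \eqref{weakmidpointproblem}, using compactness of $\mathcal P(\Omega^N)$ together with closedness of the constraint set \eqref{Pmargset} and continuity of the cost $\bar c$.

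\emph{Nonemptiness.} The set $\Pi\big((\mu_{i\leq_{\text{cx}}})_{i=1}^M,(_{\leq_{\text{cx}}}\mu_i)_{i=M+1}^N\big)$ is nonempty, so the infimum is finite. Take $\gamma\coloneqq\mu_1\otimes\dots\otimes\mu_N$. Its $i$-th marginal is $\mu_i$, and both $\mu_i\leq_{\text{cx}}\mu_i$ and $\mu_i\geq_{\text{cx}}\mu_i$ hold trivially. Hence $\text{wMM}(\mu_1,\dots,\mu_N)\leq\int_{\Omega^N}\bar c\ \dd\gamma<\infty$. Moreover $\bar c\geq 0$, and $\bar c$ is continuous on the compact set $\Omega^N$: it is an infimum over the compact set $\Omega$ of a jointly continuous function. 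So $\bar c$ is bounded.

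\emph{Compactness.} Let $(\gamma_n)_{n\in\NN}$ be a minimizing sequence in the constraint set. Since $\Omega^N$ is compact, Prokhorov's theorem gives a subsequence, not relabeled, with $\gamma_n\rightarrow\bar\gamma$ weakly for some $\bar\gamma\in\mathcal P(\Omega^N)$. Because $\bar c\in C(\Omega^N)$, the definition of weak convergence gives $\int\bar c\ \dd\gamma_n\rightarrow\int\bar c\ \dd\bar\gamma$. Therefore $\int\bar c\ \dd\bar\gamma=\text{wMM}(\mu_1,\dots,\mu_N)$.

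\emph{Closedness of the constraints.} This is the main step: we must show $\bar\gamma$ satisfies the convex order constraints. The projections $\pi_i$ are continuous, so $\pi_{i\#}\gamma_n\rightarrow\pi_{i\#}\bar\gamma$ weakly. Fix a convex $\varphi:\RR^d\rightarrow\RR$. A real-valued convex function on $\RR^d$ is continuous, so its restriction to $\Omega$ lies in $C(\Omega)$.
\begin{itemize}
\item For $i\leq M$ we have $\int\varphi\ \dd\mu_i\leq\int\varphi\ \dd\pi_{i\#}\gamma_n$ for every $n$. Passing to the limit gives $\int\varphi\ \dd\mu_i\leq\int\varphi\ \dd\pi_{i\#}\bar\gamma$, that is, $\mu_i\leq_{\text{cx}}\pi_{i\#}\bar\gamma$.
\item For $i>M$ the same argument with the reversed inequality gives $\pi_{i\#}\bar\gamma\leq_{\text{cx}}\mu_i$.
\end{itemize}
This uses only that all measures are supported in the compact set $\Omega$, so integrals of convex functions are integrals of bounded continuous functions and no integrability issue arises. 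Hence $\bar\gamma\in\Pi\big((\mu_{i\leq_{\text{cx}}})_{i=1}^M,(_{\leq_{\text{cx}}}\mu_i)_{i=M+1}^N\big)$, and $\bar\gamma$ is a minimizer of \eqref{weakMM}.
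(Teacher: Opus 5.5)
Your proposal is correct and follows essentially the same route as the paper's proof. Both arguments take the product measure for nonemptiness, extract a weakly convergent subsequence by compactness of $\Omega^N$, pass to the limit in the convex-order inequalities using continuity of real-valued convex functions, and conclude from continuity of $\bar c$.
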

\begin{proof}
	The set $\Pi\big((\mu_{i\leq_{\text{cx}}})_{i=1}^M,(_{\leq_{\text{cx}}}\mu_i)_{i=M+1}^N\big)$ from \eqref{Pmargset} is non-empty as it contains the product measure $\mu_1\otimes\dots\otimes\mu_N$. Let $(\gamma_n)_{n\in\NN}\subset\Pi\big((\mu_{i\leq_{\text{cx}}})_{i=1}^M,(_{\leq_{\text{cx}}}\mu_i)_{i=M+1}^N\big)$ be a minimizing sequence for 
	\begin{gather*}
		F(\gamma)\coloneqq\int_{\Omega^N}\bar c(x_1,\dots,x_N)\ \dd\gamma(x_1,\dots,x_N).
	\end{gather*}
	As $\Omega^N$ is compact, $(\gamma_n)_{n\in\NN}$ is tight and admits a subsequence (denoted again by $(\gamma_n)_{n\in\NN}$) that converges weakly to some $\bar\gamma\in\mathcal P(\Omega^N)$.\\
	Let $\varphi:\RR^d\rightarrow\RR$ be a convex function, then $\varphi$ is continuous \cite[Cor 10.1.1]{rockafellar1997}. Then for $i=1,\dots,M$ and all $n\in\NN$
	\begin{gather*}
		\int_\Omega\varphi(x_i)\ \dd\mu_i(x_i)\leq\int_\Omega\varphi(x_i)\ \dd(\pi_{i\#}\gamma_n)(x_i)
	\end{gather*}
	and hence
	\begin{gather*}
		\int_\Omega\varphi(x_i)\ \dd\mu_i(x_i)\leq\lim_{n\in\NN}\int_\Omega\varphi(x_i)\ \dd(\pi_{i\#}\gamma_n)(x_i)=\int_\Omega\varphi(x_i)\ \dd(\pi_{i\#}\bar\gamma)(x_i)
	\end{gather*}
	(a similar argument applies for $i=M+1,\dots,N$) and therefore\\ $\bar\gamma\in\Pi\big((\mu_{i\leq_{\text{cx}}})_{i=1}^M,(_{\leq_{\text{cx}}}\mu_i)_{i=M+1}^N\big)$.
	Clearly $F$ is continuous with respect to the weak convergence and hence
	\begin{gather*}
		\inf_{\gamma\in\Pi\big((\mu_{i\leq_{\text{cx}}})_{i=1}^M,(_{\leq_{\text{cx}}}\mu_i)_{i=M+1}^N\big)}F(\gamma)=\lim_{n\in\NN}F(\gamma_n)=F(\bar\gamma).
	\end{gather*}
\end{proof}
From now on, we denote by $\bar\gamma\in\Pi\big((\mu_{i\leq_{\text{cx}}})_{i=1}^M,(_{\leq_{\text{cx}}}\mu_i)_{i=M+1}^N\big)$ a minimizer of \eqref{weakMM}.
We now show equality between the weak centroid problem \eqref{weakmidpointproblem} and the multi-marginal problem \eqref{weakMM} and the relationship between their optimizers.

\begin{prop}\label{midpoint=MMprop}
	Let $p\in(1,\infty)$, $\mu_1,\dots,\mu_N\in\mathcal P(\Omega)$ and $(\lambda_i)_{i\in[N]}\in\Delta_N$ from \eqref{deltaN}. Then the weak centroid problem \eqref{weakmidpointproblem} equals the multi-marginal problem \eqref{weakMM},
	\begin{gather*}
			\text{wC}(\mu_1,\dots,\mu_N)=\text{wMM}(\mu_1,\dots,\mu_N).
	\end{gather*}
	If $\bar\gamma\in\Pi\big((\mu_{i\leq_{\text{cx}}})_{i=1}^M,(_{\leq_{\text{cx}}}\mu_i)_{i=M+1}^N\big)$ from \eqref{Pmargset} minimizes \eqref{weakMM}, then $\bar\nu\coloneqq\bar T_{\#}\bar\gamma\in\mathcal P(\Omega)$ minimizes \eqref{weakmidpointproblem}, where $\bar T$ is the map from \eqref{barTmap}. If $\bar\nu\in\mathcal P(\Omega)$ minimizes \eqref{weakmidpointproblem}, then there exists $\bar\gamma\in\Pi\big((\mu_{i\leq_{\text{cx}}})_{i=1}^M,(_{\leq_{\text{cx}}}\mu_i)_{i=M+1}^N\big)$ that minimizes \eqref{weakMM} such that $\bar\nu=\bar T_{\#}\bar\gamma$.
\end{prop}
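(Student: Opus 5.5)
The plan is to reduce both problems to an infimum of the classical centroid problem over convex-order-constrained marginals, and then apply Theorem \ref{chiapporithm}. The first step uses Theorems \ref{weakforwardprojthm} and \ref{weakbackwardprojthm} to rewrite each term in \eqref{weakmidpointproblem}. For $i=M+1,\dots,N$, Theorem \ref{weakforwardprojthm} applied with $\mu_1=\nu$ and $\mu_2=\mu_i$ gives
\begin{gather*}
\mathcal T_p(\mu_i|\nu)=\min_{\eta_i\leq_{\text{cx}}\mu_i}W_p^p(\nu,\eta_i).
\end{gather*}
For $i=1,\dots,M$, Theorem \ref{weakbackwardprojthm} applied with $\mu_1=\mu_i$ and $\mu_2=\nu$ gives
\begin{gather*}
\mathcal T_p(\nu|\mu_i)=\min_{\mu_i\leq_{\text{cx}}\eta_i}W_p^p(\eta_i,\nu).
\end{gather*}
Both minima are attained. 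Summing these identities and exchanging the two infima (over $\nu$ and over the $\eta_i$) yields
\begin{gather*}
\text{wC}(\mu_1,\dots,\mu_N)=\inf_{\substack{\mu_i\leq_{\text{cx}}\eta_i,\ i\leq M\\ \eta_i\leq_{\text{cx}}\mu_i,\ i>M}}\text{WC}(\eta_1,\dots,\eta_N)=\inf_{\substack{\mu_i\leq_{\text{cx}}\eta_i,\ i\leq M\\ \eta_i\leq_{\text{cx}}\mu_i,\ i>M}}\text{CMM}(\eta_1,\dots,\eta_N),
\end{gather*}
where the last equality is Theorem \ref{chiapporithm}. The right-hand side equals $\text{wMM}(\mu_1,\dots,\mu_N)$: the admissible set \eqref{Pmargset} is exactly the union of the sets $\Pi(\eta_1,\dots,\eta_N)$ over all admissible tuples $(\eta_i)$. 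So the value identity follows.

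For the first optimizer statement, let $\bar\gamma$ minimize \eqref{weakMM} and set $\eta_i\coloneqq\pi_{i\#}\bar\gamma$. Then $\bar\gamma$ minimizes $\text{CMM}(\eta_1,\dots,\eta_N)$, since any better coupling with the same marginals would also be admissible in \eqref{weakMM}. By Theorem \ref{chiapporithm}, $\bar\nu=\bar T_\#\bar\gamma$ solves $\text{WC}(\eta_1,\dots,\eta_N)$. Each term is then controlled by the one-sided projections: $\mathcal T_p(\mu_i|\bar\nu)\leq W_p^p(\bar\nu,\eta_i)$ for $i>M$, and $\mathcal T_p(\bar\nu|\mu_i)\leq W_p^p(\eta_i,\bar\nu)$ for $i\leq M$. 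Weighting by $\lambda_i$ and summing gives
\begin{gather*}
\text{wC}\leq\text{WC}(\eta_1,\dots,\eta_N)=\text{CMM}(\eta_1,\dots,\eta_N)=\text{wMM}=\text{wC},
\end{gather*}
so $\bar\nu$ is optimal for \eqref{weakmidpointproblem}.

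For the converse, let $\bar\nu$ minimize \eqref{weakmidpointproblem}. Using attainment in Theorems \ref{weakforwardprojthm} and \ref{weakbackwardprojthm}, choose $\eta_i$ realizing each projection, so that
\begin{gather*}
\text{wC}=\sum_i\lambda_iW_p^p(\eta_i,\bar\nu)\geq\text{WC}(\eta_1,\dots,\eta_N)\geq\text{wMM}=\text{wC}.
\end{gather*}
Equality then holds throughout, so $\bar\nu$ is a Wasserstein centroid of $(\eta_i)$. The second part of Theorem \ref{chiapporithm} supplies $\bar\gamma\in\Pi(\eta_1,\dots,\eta_N)$ minimizing CMM with $\bar T_\#\bar\gamma=\bar\nu$. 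Its cost equals $\text{wMM}$ and $\bar\gamma$ lies in \eqref{Pmargset}, so it minimizes \eqref{weakMM}.

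The step I expect to need the most care is the interchange of infima together with the attainment of the projections. Specifically, Theorem \ref{weakbackwardprojthm} must provide a minimizer $\bar\eta_i\geq_{\text{cx}}\mu_i$ supported in $\Omega$ for an arbitrary $\nu$, not only for absolutely continuous ones; uniqueness is not needed, only existence. I would check existence directly by compactness: the set $\{\eta\in\mathcal P(\Omega):\eta\geq_{\text{cx}}\mu_i\}$ is weakly closed by the same argument as in the existence lemma above, and $W_p$ is weakly continuous on $\mathcal P(\Omega)$. Measurability of $\bar T$ is also needed; it is single-valued because $p>1$ makes the cost strictly convex in $z$, and its measurability is implicitly assumed in Theorem \ref{chiapporithm}.
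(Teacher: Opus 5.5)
Your route is the paper's route: rewrite each term with Theorems~\ref{weakforwardprojthm} and~\ref{weakbackwardprojthm}, exchange the infima, then apply Theorem~\ref{chiapporithm}. You treat the optimizers more explicitly than the paper does. The genuine gap is in the step you flagged, but it is not an attainment issue.

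Theorem~\ref{weakbackwardprojthm} is an $\RR^d$ result. Its minimizer is $\text{law}(X+Y-\EE[Y|X])$ for an optimal weak coupling $(X,Y)$ of $(\mu_i,\nu)$, and this law need not be supported in $\Omega$. The tuple must come from \eqref{Pmargset}, so $\eta$ has to be restricted to $\mathcal P(\Omega)$. After that restriction only $\mathcal T_p(\nu|\mu_i)\leq\min\{W_p^p(\eta,\nu):\eta\in\mathcal P(\Omega),\ \mu_i\leq_{\text{cx}}\eta\}$ survives, and the inequality can be strict. Take $\Omega=[0,1]$, $\mu_i=\delta_0$, $\nu=\frac12(\delta_0+\delta_1)$. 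Then $\mathcal T_p(\nu|\delta_0)=2^{-p}$, but $\delta_0$ is the only element of $\mathcal P([0,1])$ dominating $\delta_0$, and $W_p^p(\delta_0,\nu)=\frac12$. The unconstrained minimizer is $\frac12(\delta_{-1/2}+\delta_{1/2})$.

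Your compactness argument produces a minimizer of the restricted problem but gives no control of its value. So only $\text{wC}\leq\text{wMM}$ is established; your one-sided bounds in the forward optimizer step prove exactly this. The reverse inequality is unjustified, yet you use it in three places:
\begin{itemize}
\item when you exchange the infima;
\item in the final equality $\text{wMM}=\text{wC}$ of the first optimizer statement;
\item in $\text{wC}=\sum_i\lambda_iW_p^p(\eta_i,\bar\nu)$ with $\eta_i\in\mathcal P(\Omega)$ in the converse.
\end{itemize}
The paper's proof makes the same tacit restriction and has the same gap.

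The gap cannot be closed, because the statement is false as written. Let $\Omega$ be the closed unit disc in $\RR^2$, $N=M=2$, $\lambda_1=\lambda_2=\frac12$, $\mu_1=\frac12(\delta_{e_1}+\delta_{-e_1})$ and $\mu_2=\frac12(\delta_{e_2}+\delta_{-e_2})$.
\begin{itemize}
\item The points $\pm e_1,\pm e_2$ are extreme points of $\Omega$, and a probability measure on $\Omega$ with an extreme point as barycenter is a Dirac mass. Hence $\mu_i$ is the only element of $\mathcal P(\Omega)$ dominating $\mu_i$, and every admissible $\gamma$ lies in $\Pi(\mu_1,\mu_2)$.
\item Since $\bar c(x_1,x_2)=2^{-p}|x_1-x_2|^p$ and $|x_1-x_2|=\sqrt2$ on the supports, $\text{wMM}=2^{-p}(\sqrt2)^p=2^{-p/2}$.
\item Take $\nu=\frac14(\delta_{e_1}+\delta_{-e_1}+\delta_{e_2}+\delta_{-e_2})$. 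The coupling sending $\pm e_1$ to $\frac12\delta_{\pm e_1}+\frac14(\delta_{e_2}+\delta_{-e_2})$ has barycenters $\pm\frac12e_1$, so $\mathcal T_p(\nu|\mu_1)\leq2^{-p}$. Symmetrically $\mathcal T_p(\nu|\mu_2)\leq2^{-p}$.
\item Therefore $\text{wC}\leq2^{-p}<2^{-p/2}=\text{wMM}$.
\end{itemize}

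What your argument does prove is a corrected statement. In it, the first $M$ marginals of $\gamma$ may live on the Minkowski combination $\Omega+\Omega-\Omega$, which contains $X+Y-\EE[Y|X]$, while $z$ still ranges over $\Omega$ in $\bar c$. Theorem~\ref{chiapporithm} is then used in this mixed form, which holds by the same pushforward-and-gluing proof.
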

\begin{proof}
	We get by Theorem \ref{weakforwardprojthm} and Theorem \ref{weakbackwardprojthm}
	\begin{align*}
		\text{wC}(\mu_1,\dots,\mu_N)
		&=\inf_{\nu\in\mathcal P(\Omega)}\sum_{i=M+1}^N\mathcal \lambda_i \inf_{\eta_i\leq_{\text{cx}}\mu_i}W_p^p(\eta_i,\nu)+\sum_{i=1}^M\lambda_i\inf_{\mu_i\leq_{\text{cx}}\eta_i}W_p^p(\nu,\eta_i)\\
		&=\inf_{\substack{\mu_i\leq_{\text{cx}}\eta_i\\ i=1,\dots,M}}\inf_{\substack{\eta_i\leq_{\text{cx}}\mu_i\\ i=M+1,\dots,N}}\text{CMM}(\eta_1,\dots,\eta_N)\\
		&=\text{wMM}(\mu_1,\dots,\mu_N),
	\end{align*}
	where the second equality and the relationship between the minimizers is due to Theorem \ref{chiapporithm}.
\end{proof}
Next, we discuss the dual problem for $\text{wMM}(\mu_1,\dots,\mu_N)$:

\begin{thm}\label{weakMMdualthm}
	Let $p\in (1,\infty)$,  $\mu_1,\dots,\mu_N\in\mathcal P(\Omega)$ and $(\lambda_i)_{i\in[N]}\in\Delta_N$ from \eqref{deltaN}. Then we get equality of \eqref{weakmidpointproblem}, \eqref{weakMMdual}, and \eqref{weakMMdualrelaxed}, i.e.
	\begin{gather*}
		\text{wMM}(\mu_1,\dots,\mu_N)=\text{wD}(\mu_1,\dots,\mu_N)=\text{wDr}(\mu_1,\dots,\mu_N).
	\end{gather*}
\end{thm}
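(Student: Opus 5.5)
The plan is to prove the chain
\[
\text{wD}\le\text{wDr}\le\text{wMM}\le\text{wD}.
\]

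\emph{First inequality.} This is immediate: any $(\varphi_i)\in\mathcal A_{\bar c}$, with the concave ones restricted to $K_i$, belongs to $\mathcal B_{\bar c}$. Since $\mu_i$ is concentrated on $K_i$, the objective values coincide.

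\emph{Second inequality (weak duality for the relaxed problem).} Take $(\varphi_i)\in\mathcal B_{\bar c}$ and $\gamma\in\Pi\big((\mu_{i\leq_{\text{cx}}})_{i=1}^M,(_{\leq_{\text{cx}}}\mu_i)_{i=M+1}^N\big)$.

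For $i\le M$, $\varphi_i$ is convex, so $\int\varphi_i\,\dd\mu_i\le\int\varphi_i\,\dd\pi_{i\#}\gamma$.

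For $i>M$, first note that $\pi_{i\#}\gamma\le_{\text{cx}}\mu_i$ forces $\pi_{i\#}\gamma$ to be supported in $K_i$: test against the convex function $\text{dist}(\cdot,K_i)$. Then $\varphi_i$ is concave and usc on $K_i$ with values in $\RR\cup\{-\infty\}$, and we want $\int\varphi_i\,\dd\mu_i\le\int\varphi_i\,\dd\pi_{i\#}\gamma$. Via Strassen, write $\mu_i$ as the second marginal of a martingale coupling $(Y,X)$ with $Y\sim\pi_{i\#}\gamma$. Jensen for concave usc functions (write $\varphi_i$ as an infimum of affine functions dominating it on $K_i$) gives $\EE[\varphi_i(X)\mid Y]\le\varphi_i(Y)$. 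Since $\varphi_i\in L^1(\mu_i)$, this yields the desired inequality, possibly with the right side equal to $-\infty$, which only helps.

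Summing over $i$ and using $\sum_i\varphi_i(x_i)\le\bar c$ on $\Omega^M\times K_{M+1}\times\dots\times K_N$, which carries $\gamma$, we get $\sum_i\int\varphi_i\,\dd\mu_i\le\int\bar c\,\dd\gamma$.

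\emph{Third inequality (the main step).} I would use a minimax argument. By the classical duality $\text{CMM}=\text{CD}$, written as a Lagrangian,
\[
\text{wMM}=\inf_{\gamma\in\mathcal P(\Omega^N)}\ \sup_{(\varphi_i)}\ \Big[\int\bar c\,\dd\gamma-\sum_{i=1}^N\Big(\int\varphi_i\,\dd\pi_{i\#}\gamma-\int\varphi_i\,\dd\mu_i\Big)\Big],
\]
where the supremum runs over $\varphi_i\in C(\Omega)$ convex for $i\le M$ and concave for $i>M$. Indeed, the inner supremum equals $\int\bar c\,\dd\gamma$ when the convex-order constraints hold and $+\infty$ otherwise. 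Here I use that convex order on $\Omega$ can be tested with continuous convex functions on $\Omega$: these are uniform limits of restrictions of convex functions on $\RR^d$, for instance via the sup of affine minorants on a slightly smaller domain, or simply by density arguments.

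The functional is linear in $\gamma$ and linear in $(\varphi_i)$. The set $\mathcal P(\Omega^N)$ is weakly compact and convex, and the potential set is a convex cone, with continuity in $\gamma$ for each fixed $\varphi$. So Sion's (or Fan's) minimax theorem lets us exchange $\inf$ and $\sup$. For fixed $(\varphi_i)$,
\[
\inf_\gamma\Big[\int\Big(\bar c-\sum_i\varphi_i(x_i)\Big)\dd\gamma\Big]=\min_{\Omega^N}\Big(\bar c-\sum_i\varphi_i\Big).
\]
Subtracting this constant from $\varphi_1$ keeps it convex and continuous and makes the tuple admissible in $\mathcal A_{\bar c}$. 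Hence the exchanged value equals $\text{wD}$, which gives $\text{wMM}\le\text{wD}$.

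The obstacle I expect is checking the hypotheses of the minimax theorem: compactness is only on the $\gamma$ side, and the function must be lower semicontinuous in $\gamma$, which holds since $\bar c$ and the $\varphi_i$ are continuous on compact $\Omega$. I also need to justify the reduction of convex order to continuous convex test functions on $\Omega$. As a fallback, I would instead go through the Fenchel–Rockafellar duality in $C(\Omega^N)$ with the cone constraints.
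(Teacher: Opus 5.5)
Your proof is correct, but for the main inequality $\text{wMM}\le\text{wD}$ it takes a different route from the paper.

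\textbf{The paper's route.} The paper uses Fenchel--Rockafellar duality on $C(\Omega)^N$:
\begin{itemize}
\item $F$ is $-\sum_i\int\varphi_i\,\dd\mu_i$ restricted to the convex/concave cone;
\item $G$ is the indicator of $\{f\le\bar c\}$ in $C(\Omega^N)$, and $A$ is the sum map;
\item the qualification condition is checked at $(C,0,\dots,0)$ with $C<0$, where $G$ is continuous because $\bar c\ge 0$;
\item the conjugates are then computed over signed measures $\gamma\in\mathcal M(\Omega^N)$, and positivity and the convex-order constraints appear as the conditions for $G^*$ and $F^*$ to be finite.
\end{itemize}

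\textbf{Your route.} You write $\text{wMM}$ as an $\inf$--$\sup$ of a bilinear Lagrangian over $\mathcal P(\Omega^N)$ and exchange them by Sion's theorem. The decisive hypothesis is weak compactness of $\mathcal P(\Omega^N)$ rather than an interior-point condition. This avoids signed measures and conjugate computations altogether. The closing normalization, shifting $\varphi_1$ by $\min_{\Omega^N}(\bar c-\sum_i\varphi_i)$, is clean.

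\textbf{Comparison.} Both are standard convex-duality tools and cost about the same here. Fenchel--Rockafellar also gives attainment on the measure side, but the paper proves that separately by compactness anyway. In your version, $\text{wMM}\le\inf_\gamma\sup_\varphi L$ needs no approximation argument. Infeasible $\gamma$ are already detected by restrictions of convex functions on $\RR^d$, and for feasible $\gamma$ you have $\sup_\varphi L\ge L(\gamma,0)$. So testing convex order with $C(\Omega)$-convex functions only matters in the weak-duality direction. There the cleanest justification is monotone approximation by suprema of affine minorants with slope at most $k$, followed by Dini or monotone convergence.

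\textbf{Your weak-duality step.} This step is more careful than the paper's one-line ``integrate the inequality against $\gamma$''. That line silently needs two facts:
\begin{itemize}
\item $\gamma$ is concentrated on $\Omega^M\times K_{M+1}\times\dots\times K_N$, which is where the constraint in $\mathcal B_{\bar c}$ is imposed;
\item $\int\varphi_i\,\dd\mu_i\le\int\varphi_i\,\dd\pi_{i\#}\gamma$ holds for potentials that are only continuous convex on $\Omega$, or usc concave and possibly $-\infty$ on $\partial K_i$. Neither kind is a test function in the definition of $\le_{\text{cx}}$.
\end{itemize}
Your test against $\text{dist}(\cdot,K_i)$ and your conditional Jensen argument via a Strassen coupling supply exactly these facts. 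For the latter, take the infimum over a countable dense family of affine majorants.

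\textbf{Minor slips.} Since $\int\varphi_i\,\dd\mu_i$ is finite, your Jensen inequality shows $\int\varphi_i\,\dd\pi_{i\#}\gamma>-\infty$. A right-hand side equal to $-\infty$ would not ``help''; it would contradict the inequality. Also, the Lagrangian representation of $\text{wMM}$ does not use $\text{CMM}=\text{CD}$; it is just penalization of the convex-order constraints.
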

The proof of this equality follows the standard proof of duality in optimal transport theory (see for example \cite[Thm 1.3]{villani2003}), using the Fenchel-Rockafellar duality theorem.
\begin{proof}
	Define $F:C(\Omega)^N\rightarrow\RR\cup\{+\infty\}$ through
	\begin{gather*}
		F((\varphi_i)_{i\in[N]})\coloneqq\\
		\begin{cases}
			-\sum_{i=1}^n\int_\Omega\varphi_i\ \dd\mu_i&\varphi_i\text{ convex for }i=1,\dots,M\ \varphi_i\text{ concave for }i=M+1,\dots,N\\
			+\infty&\text{else,}
		\end{cases}
	\end{gather*}
	define $G:C(\Omega^N)\rightarrow\RR\cup\{+\infty\}$ through
	\begin{gather*}
		G(f)\coloneqq
		\begin{cases}
			0&f\leq \bar c\\
			+\infty&\text{else}
		\end{cases}
	\end{gather*}
	and further define the operator $A:C(\Omega)^N\rightarrow C(\Omega^N)$ by
	\begin{gather*}
		A((\varphi_i)_{i\in[N]})(x_1,\dots,x_n)=\sum_{i=1}^N\varphi_i(x_i).
	\end{gather*}
	Note that for $(\tilde\varphi_i)_{i\in[N]}\coloneqq(C,0,\dots,0)$ for $C<0$ we get
	$F((\tilde\varphi_i)_{i\in[N]})<+\infty$ and $G=0$ in a neighbourhood of $(\tilde\varphi_i)_{i\in[N]}$, hence $G$ is continuous at $(\tilde\varphi_i)_{i\in[N]}$ and hence the qualifying condition holds.
	We therefore get by Fenchel-Rockafellar duality (see for example \cite[Thm 4.1, Rmk 4.2]{ekeland1976}) that
	\begin{align*}
		-\text{wD}(\mu_1,\dots,\mu_N)&=\inf_{(\varphi_i)_{i\in[N]}\in C(\Omega)^N} F((\varphi_i)_{i\in[N]})+ G(A((\varphi_i)_{i\in[N]}))\\
		&=\sup_{\gamma\in\mathcal M(\Omega^N)}-F^*(-A^*(\gamma))-G^*(\gamma).
	\end{align*}
	We compute
	\begin{align*}
		G^*(\gamma)&=\sup_{f\in C(\Omega^N)}\int_{\Omega^N}f\ \dd\gamma-G(f)\\
		&=\sup_{\substack{f\in C(\Omega^N)\\ f\leq \bar c}}\int_{\Omega^N}f\ \dd\gamma.
	\end{align*}
	We get $A^*(\gamma)=(\pi_{i\#}\gamma)_{i\in[N]}$ and hence
	\begin{align*}
		F^*(-A^*\gamma)&=\sup_{(\varphi_i)_{i\in[N]}}\sum_{i=1}^N\int_\Omega-\varphi_i\ \dd(\pi_{i\#}\gamma)-F((\varphi_i)_{i\in[N]})\\
		&=\sum_{i=1}^M\sup_{\varphi_i\in C(\Omega)\text{ convex}}\int_\Omega\varphi_i\ \dd(\mu_i-\pi_{i\#}\gamma)\\
		&+\sum_{i=M+1}^N\sup_{\varphi_i\in C(\Omega)\text{ concave}}\int_\Omega\varphi_i\ \dd(\mu_i-\pi_{i\#}\gamma)
	\end{align*}
	and therefore
	\begin{align*}
		&\sup_{\gamma\in\mathcal M(\Omega^N)}-F^*(-A^*(\gamma))-G^*(\gamma)\\
		&=-\inf_{\gamma\in\mathcal M(\Omega^N)}\sum_{i=1}^M\sup_{\varphi_i\in C(\Omega)\text{ convex}}\int_\Omega\varphi_i\ \dd(\mu_i-\pi_{i\#}\gamma)\\
		&+\sum_{i=M+1}^N\sup_{\varphi_i\in C(\Omega)\text{ concave}}\int_\Omega\varphi_i\ \dd(\mu_i-\pi_{i\#}\gamma)+\sup_{\substack{f\in C(\Omega^N)\\ f\leq \bar c}}\int_{\Omega^N}f\ \dd\gamma.
	\end{align*}
	Let $\gamma\notin\mathcal M_+(\Omega^N)$ then we can pick $f_0\leq 0$ with $\int_{\Omega^N}f_0\ \dd\gamma> 0$ to achieve
	\begin{gather*}
		\sup\limits_{\substack{f\in C(\Omega^N)\\ f\leq \bar c}}\int_{\Omega^N}f\ \dd\gamma\rightarrow+\infty.
	\end{gather*}
	Hence, the minimization forces $\gamma\in\mathcal M_+(\Omega^N)$ and we pick $f=\bar c$. Similarly, finiteness of the minimization requires $\pi_{i\#}\gamma\geq_{\text{cx}}\mu_i$ for $i=1,\dots,M$ and $\pi_{i\#}\gamma\leq_{\text{cx}}\mu_i$ for $i=M+1,\dots,N$. Therefore, we get
	\begin{align*}
		-\text{wD}(\mu_1,\dots,\mu_N)&=\inf_{(\varphi_i)_{i\in[N]}\in C(\Omega)^N} F((\varphi_i)_{i\in[N]})+ G(A((\varphi_i)_{i\in[N]}))\\
		&=\sup_{\gamma\in\mathcal M(\Omega^N)}-F^*(-A^*(\gamma))-G^*(\gamma)\\
		&=-\text{wMM}(\mu_1,\dots,\mu_N).
	\end{align*}
	We also clearly have
	\begin{gather*}
		\text{wD}(\mu_1,\dots,\mu_N)\leq\text{wDr}(\mu_1,\dots,\mu_N).
	\end{gather*}
	Integrating the inequality
	\begin{gather*}
		\sum_{i=1}^N\varphi_i(x_i)\leq \bar c(x_1,\dots,x_N)
	\end{gather*}
	with respect to $\gamma\in\Pi\big((\mu_{i\leq_{\text{cx}}})_{i=1}^M,(_{\leq_{\text{cx}}}\mu_i)_{i=M+1}^N\big)$ finally gives 
	\begin{gather*}
		\text{wDr}(\mu_1,\dots,\mu_N)\leq\text{wMM}(\mu_1,\dots,\mu_N)
	\end{gather*}
	which concludes the proof.
\end{proof}
\begin{lma}\label{dualattainmentlma}
	Let $p\in (1,\infty)$, $\mu_1,\dots,\mu_M\in\mathcal P(\Omega)$, $\mu_{M+1},\dots,\mu_N\in\mathcal P_{\text{ac}}(\Omega)$ and $(\lambda_i)_{i\in[N]}\in\Delta_N$ from \eqref{deltaN}, then the problem \eqref{weakMMdualrelaxed}
	admits maximizers $(\bar\varphi_i)_{i\in [N]}\in\mathcal B_{\bar c}$ from \eqref{B_cset}.
\end{lma}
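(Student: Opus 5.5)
We take a maximizing sequence $(\varphi_i^n)_{i\in[N]}\in\mathcal B_{\bar c}$ for \eqref{weakMMdualrelaxed}, normalize it so that it is compact in a suitable topology, and pass to the limit. The limit is then checked to be admissible and optimal.

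\textbf{Step 1: improving the sequence by convex conjugates.} For each index we replace $\varphi_i^n$ by the best admissible partner given the others. For $i=1,\dots,M$ we replace $\varphi_i^n$ by the largest convex function below
\begin{gather*}
	\psi_i(x_i)\coloneqq\inf_{x_j,\,j\neq i}\Big\{\bar c(x_1,\dots,x_N)-\sum_{j\neq i}\varphi_j^n(x_j)\Big\},
\end{gather*}
that is, its lower convex envelope. For $i>M$ we replace it by the concave envelope (smallest concave majorant, restricted to $K_i$) of a function that is still dominated by $\psi_i$. The constraint must survive this step. For $i\le M$ it does automatically, since the envelope lies below $\psi_i$. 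For $i>M$ it does not: a concave majorant can exceed $\psi_i$. The alternative is to keep the $\varphi_i$ with $i>M$ and only improve the convex ones, then alternate.

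The reason for doing this is regularity. Since $\bar c$ is Lipschitz on $\Omega^N$ with a constant $L$ depending only on $p$ and $\operatorname{diam}\Omega$, every $\psi_i$ is $L$-Lipschitz, and the convex envelope of an $L$-Lipschitz function on the convex set $\Omega$ is again $L$-Lipschitz. Each replacement can only increase the objective. We also shift constants between the potentials, $\varphi_1\mapsto\varphi_1+a$ and $\varphi_N\mapsto\varphi_N-a$, to normalize, for instance so that $\max_{K_i}\varphi_i^n=0$ for each $i>M$.

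\textbf{Step 2: compactness.} The convex potentials are uniformly Lipschitz. They are also uniformly bounded, using the normalization together with the bound of the objective from below by $\text{wD}-1$. By Arzel\`a--Ascoli they converge uniformly along a subsequence.

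The concave potentials $\varphi_i$ for $i>M$ are the delicate part. They are bounded above by $0$ and bounded above by $\psi_i$, but near $\partial K_i$ they may tend to $-\infty$; this is exactly why $\mathcal B_{\bar c}$ allows values in $\RR\cup\{-\infty\}$ and mere upper semicontinuity. Concave functions bounded above on $K_i$ with $\int\varphi_i\,\dd\mu_i$ bounded below are locally uniformly bounded, and hence locally Lipschitz, on the relative interior of $K_i$. Here we use $\mu_i\in\mathcal P_{\text{ac}}(\Omega)$. It implies that $K_i$ has nonempty interior (or we work in its affine hull) and that $\mu_i$ gives no mass to $\partial K_i$. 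A diagonal argument then gives local uniform convergence on $\operatorname{int}K_i$. We extend the limit to $\partial K_i$ by upper semicontinuous regularization, that is, by the $\limsup$ from the interior, which is again concave.

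\textbf{Step 3: passing to the limit.} The constraint passes to the limit pointwise on $\Omega^M\times\prod_{i>M}\operatorname{int}K_i$. It extends to the boundary because the limits are usc and $\bar c$ is continuous. For the objective, the convex terms converge by uniform convergence. For the concave terms we apply Fatou's lemma to the functions $-\varphi_i^n\ge0$ (from the normalization $\varphi_i^n\le 0$), using $\mu_i(\partial K_i)=0$, which gives $\limsup_n\int\varphi_i^n\,\dd\mu_i\le\int\bar\varphi_i\,\dd\mu_i$. Hence the limit attains $\text{wDr}$, the limit is finite, and in particular $\bar\varphi_i\in L^1(\mu_i)$.

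\textbf{Main obstacle.} The hard step is the lower bound in Step 2. The normalization pushes each concave potential down, while the total value stays bounded below. We must prevent one $\varphi_i$ from drifting to $-\infty$ in the interior while a convex potential drifts to $+\infty$. I would first try to control the drift by evaluating the constraint at the point $x_i=\EE[\mu_i]$ in the interior, combined with the fact that the objective is bounded below. If that fails, the fallback is to fix each $\varphi_i(\EE[\mu_i])$ by transferring constants between the potentials.
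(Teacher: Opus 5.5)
Your outline has the same architecture as the paper's proof: a maximizing sequence, convex envelopes only for $i\le M$, shifting constants, compactness of the concave potentials only on $\operatorname{int}K_i$, usc extension to $\partial K_i$, and Fatou with $\mu_i(\partial K_i)=0$. But the step that carries the lemma is only asserted. That step is: $\varphi_i^n\le 0$ on $K_i$ together with $\int\varphi_i^n\,\dd\mu_i\ge -B$ forces a uniform lower bound on each compact $C\Subset\operatorname{int}K_i$. Neither device in your ``Main obstacle'' paragraph supplies it.

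Pinning $\varphi_i^n(\EE[\mu_i])$ gives, through Jensen, only an \emph{upper} bound on $\int\varphi_i^n\,\dd\mu_i$; that is what the paper uses this normalization for. Information at one interior point plus an upper bound says nothing about other interior points: $\varphi^n(x)=\min\{0,n(x-\tfrac12)\}$ on $[0,1]$ has $\varphi^n(\tfrac12)=0=\max\varphi^n$, yet $\varphi^n(\tfrac14)=-\tfrac n4$.

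The missing idea is a quantitative statement that $\mu_i$ surrounds every point of $C$. The paper gets it from Csisz\'ar--Mat\'u\v{s}: each $x\in C$ is the barycenter of some $\beta_x\ll\mu_i$ with $\|\dd\beta_x/\dd\mu_i\|_{L^\infty(\mu_i)}\le L_C$. Jensen then gives $\varphi_i^n(x)\ge\int\varphi_i^n\,\dd\beta_x\ge A-L_C\bigl(A-\int\varphi_i^n\,\dd\mu_i\bigr)$, where $A$ is the upper bound. An elementary substitute fits your normalization. There is $\varepsilon_C>0$ such that every closed half-space whose boundary passes through a point $x\in C$ has $\mu_i$-mass at least $\varepsilon_C$. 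Indeed, such a half-space $\{e\cdot(y-x)\le0\}$ contains the open half-space $\{e\cdot(y-x)<-r/2\}$, with $r=\operatorname{dist}(C,\partial K_i)$, which meets $\operatorname{supp}\mu_i$; the masses of these open half-spaces are lower semicontinuous in $(x,e)$ on the compact set $C\times S^{d-1}$. Now take $H=\{y:\eta\cdot(y-x)\le0\}$ for a supergradient $\eta$ of $\varphi_i^n$ at $x$. On $H$ one has $\varphi_i^n\le\varphi_i^n(x)\le0$, so $\int\varphi_i^n\,\dd\mu_i\le\varepsilon_C\,\varphi_i^n(x)$.

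Two further points need repair.

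First, the ``drift'' you worry about is already controlled by your own normalization, but not by evaluating at $\EE[\mu_i]$. Evaluate the constraint at maximizers of the concave potentials, where they vanish; this gives $\sum_{k\le M}\max_\Omega\varphi_k^n\le\max\bar c$. Since $\int\varphi_j^n\,\dd\mu_j\le0$ for $j>M$, the lower bound on the objective then gives $\int\varphi_j^n\,\dd\mu_j\ge\text{wDr}-1-\max\bar c$ for every $j>M$. To bound the convex potentials individually you must also pin $M-1$ of them, because $(\varphi_1+a,\varphi_2-a)$ changes neither constraint nor objective. For $M=0$, only $N-1$ of the maxima can be pinned.

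Second, the regularity claim in Step 1 is false: the convex envelope on $\Omega$ of an $L$-Lipschitz function need not be Lipschitz near curved parts of $\partial\Omega$. On the closed unit disc take $b=(1,0)$ and $f=-|\cdot-b|$. Then $f^{**}(b)=f(b)=0$ because $b$ is extreme. But $(1-\delta,0)$ is the midpoint of $(1-\delta,\pm\sqrt{2\delta-\delta^2})$, where $f=-\sqrt{2\delta}$, so $f^{**}$ has difference quotient at least $\sqrt{2/\delta}$ at $b$. The measures $\mu_1,\dots,\mu_M$ may charge $\partial\Omega$, and $\mathcal B_{\bar c}$ requires the convex limits to be continuous on all of $\Omega$. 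So the equicontinuity up to $\partial\Omega$ that you feed into Arzel\`a--Ascoli needs a separate argument. Uniform bounds alone do not give it, as $\max\{0,1+n(x-1)\}$ on $[0,1]$ shows.
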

The proof of Lemma \ref{dualattainmentlma} can be found in the appendix, but the following remark sketches its idea:
\begin{rmk}\label{attainmentlmarmk}
	The standard strategy to show attainment of the dual problem in optimal transport (see for example \cite[Prop 1.11]{santambrogio2015}) would be to pick a maximizing sequence $(\varphi_1^n,\dots,\varphi_N^n)_{n\in\NN}\subset\mathcal A_{\bar c}$ for \eqref{weakMMdual} (which is automatically a maximizing sequence for \eqref{weakMMdualrelaxed}) and to write for each $j=1,\dots,N$
	\begin{gather}\label{feasibilitycond}
		\varphi_j^n(x_j)\leq\bar c(x_1,\dots,x_N)-\sum_{i\neq j}\varphi_i^n(x_i)
	\end{gather}
	and, in order to improve the value of
	\begin{gather*}
		(\varphi_1,\dots,\varphi_N)\mapsto\sum_{i=1}^N\int_\Omega\varphi_i(x_i)\ \dd\mu_i(x_i),
	\end{gather*}
	while preserving feasibility, to replace
	\begin{gather*}
		\varphi_j^n(x_j)\rightarrow\hat\varphi_j^n(x_j)\coloneqq \inf_{(x_i)_{i\neq j}}\bar c(x_1,\dots,x_N)-\sum_{i\neq j}\varphi_i^n(x_i).
	\end{gather*}
	However, it is not obvious that $\hat\varphi_j^n(x_j)$ is still convex or concave. Hence, for $j=1,\dots,M$ we replace $\varphi_j^n(x_j)\rightarrow\hat\varphi_j^{n**}(x_j)$, using the property that the biconjugate is the largest convex minorant of a function. Normalizing additionally
	\begin{gather*}
		\varphi_1^n(\EE[\mu_1])=\dots=\varphi_{N-1}^n(\EE[\mu_{N-1}])=0
	\end{gather*}
	allows us to derive universal global bounds, independent of $x_j$ and $n$ such that $-C\leq \varphi_j^n(x_j)\leq C$ for each $j=1,\dots,M$.\\
	For the concave potentials $\varphi_{M+1}^n,\dots,\varphi_N^n$ there exists no canonical replacement similar to the biconjugate. While we are still able to derive global upper bounds for the concave potentials using \eqref{feasibilitycond}, the only information that we have available to derive a lower bound is that for each $j=M+1,\dots,N$ we get the integral bound
	\begin{gather*}
		-C\leq \int_{K_j}\varphi_j^n(x_j)\ \dd\mu_j(x_j)\leq C
	\end{gather*}
	with $C$ independent of $n$. This integral bound allows us to derive a uniform local lower bound on a compact set in the (relative) interior of $K_j=\text{conv}(\text{supp}(\mu_j))$, using Theorem \ref{csiszarthm} due to Csiszár and Matú\v{s} \cite{csiszar2001}.\\
	Having established uniform boundedness for the convex potentials on the whole of $\Omega$ and uniform boundedness of the concave potentials only on a compact subset of $K_j$ allows us to apply Arzela-Ascoli and find a pointwise limit $(\bar\varphi_1,\dots,\bar\varphi_N)$ of $(\varphi_1^n,\dots,\varphi_N^n)_{n\in\NN}$. For $j=M+1,\dots,N$ the limit $\bar\varphi_j$ is only defined in $\text{int}(K_j)$, but we extend it in an upper-semicontinuous way to the boundary of $K_j$. Due to the absolute continuity of $\mu_j$, this does not change the value of $\int\bar\varphi_j\ \dd\mu_j$ and as such produces admissible maximizers for \eqref{weakMMdualrelaxed} --- however, it destroys the finiteness and continuity required of maximizers for \eqref{weakMMdual}.
\end{rmk}
We also get an optimality condition relating the solutions of $\text{wMM}(\mu_1,\dots,\mu_N)$ and $\text{wD}(\mu_1,\dots,\mu_n)$ by using the classical optimality condition relating solutions of \eqref{CMM} and \eqref{CD}:
\begin{lma}\label{weakMMoptimality}
	Let $p\in(1,\infty)$, $\mu_1,\dots,\mu_N\in\mathcal P(\Omega)$ and $(\lambda_i)_{i\in [N]}\in\Delta_N$ from \eqref{deltaN}.
	Let $(\varphi_{i})_{i\in[N]}\in\mathcal B_{\bar c}$ from \eqref{B_cset} and let $\gamma\in \Pi((\mu_{i\leq_{\text{cx}}})_{i=1}^M,(_{\leq_{\text{cx}}}\mu_i)_{i=M+1}^N)$ from \eqref{Pmargset}. TFAE:
	\begin{enumerate}
		\item $\gamma$ solves $\text{wMM}(\mu_1,\dots,\mu_N)$ from \eqref{weakMM} and $(\varphi_i)_{i\in[N]}$ solve $\text{wDr}(\mu_1,\dots,\mu_N)$ from \eqref{weakMMdualrelaxed}
		\item for $\gamma$-a.e. $(x_1,\dots,x_N)\in\Omega^N$,
		\begin{gather}\label{MMoptimality}
			\sum_{i=1}^N\varphi_i(x_i)=\bar c(x_1,\dots,x_N)
		\end{gather}
		holds for $\bar c$ from \eqref{infconvcost} and for all $i\in[N]$ we get
		\begin{gather}\label{massbalanceoptimality}
			\int_\Omega\varphi_i(x_i)\ \dd(\pi_{i\#}\gamma)(x_i)=\int_\Omega\varphi_i(x_i)\ \dd\mu_i(x_i).
		\end{gather}
	\end{enumerate}
\end{lma}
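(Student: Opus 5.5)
The plan is to compare, for any admissible pair, the primal value with the dual value through a chain of inequalities. By Theorem \ref{weakMMdualthm}, $\text{wMM}=\text{wDr}$, so (1) holds exactly when that chain collapses to equalities. Fix $(\varphi_i)_{i\in[N]}\in\mathcal B_{\bar c}$ and $\gamma\in\Pi((\mu_{i\leq_{\text{cx}}})_{i=1}^M,(_{\leq_{\text{cx}}}\mu_i)_{i=M+1}^N)$.

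First, the concave potentials are only defined on $K_i$, so we check that the integrals against $\pi_{i\#}\gamma$ make sense. Since $\pi_{i\#}\gamma\leq_{\text{cx}}\mu_i$, the measure $\pi_{i\#}\gamma$ is supported in $K_i$ for $i\ge M+1$ (test with the convex distance function to $K_i$). So $\gamma$ is concentrated on $\Omega^M\times K_{M+1}\times\dots\times K_N$, which is where the constraint in $\mathcal B_{\bar c}$ holds. Next, we need the convex order inequality for the concave, upper-semicontinuous, possibly $-\infty$-valued $\varphi_i$:
\begin{gather*}
\int\varphi_i\,\dd\mu_i\le\int\varphi_i\,\dd\pi_{i\#}\gamma\quad (i>M).
\end{gather*}
This follows from Strassen's theorem: take a martingale coupling with $\mu_i$ conditionally distributed around a point $y\sim\pi_{i\#}\gamma$, and apply Jensen's inequality for the concave $\varphi_i$ on each fibre. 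Jensen applies because $\varphi_i$ is bounded above and u.s.c., so the integrals are well defined in $[-\infty,\infty)$; the left side is finite since $\varphi_i\in L^1(\mu_i)$. For $i\le M$ the convex continuous $\varphi_i$ gives directly $\int\varphi_i\,\dd\mu_i\le\int\varphi_i\,\dd\pi_{i\#}\gamma$.

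Combining these gives the chain
\begin{gather*}
\sum_{i=1}^N\int\varphi_i\,\dd\mu_i\le\sum_{i=1}^N\int\varphi_i\,\dd\pi_{i\#}\gamma=\int\sum_{i=1}^N\varphi_i(x_i)\,\dd\gamma\le\int\bar c\,\dd\gamma .
\end{gather*}
Here the upper bounds on the $\varphi_i$ justify splitting the integral of the sum. This shows once more that $\text{wDr}\le\text{wMM}$.

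For (1)$\Rightarrow$(2): if both are optimal, the two ends are equal by Theorem \ref{weakMMdualthm}, so every inequality is an equality. Equality in the last step, with nonnegative integrand $\bar c-\sum\varphi_i$, gives \eqref{MMoptimality} $\gamma$-a.e. All middle terms are then finite. Each termwise inequality $\int\varphi_i\,\dd\mu_i\le\int\varphi_i\,\dd\pi_{i\#}\gamma$ holds, and the sums agree, so each one is an equality, which is \eqref{massbalanceoptimality}.

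For (2)$\Rightarrow$(1): the conditions make the chain an equality, so $\sum\int\varphi_i\,\dd\mu_i=\int\bar c\,\dd\gamma$. Since $\text{wDr}\le\text{wMM}$, we get $\int\bar c\,\dd\gamma\le\text{wMM}$ and $\sum\int\varphi_i\,\dd\mu_i\ge\text{wDr}$, so both $\gamma$ and $(\varphi_i)$ are optimal.

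The main obstacle is the Jensen/Strassen step for the extended-valued concave potentials. I would use the disintegration $\mu_i=\int m_y\,\dd(\pi_{i\#}\gamma)(y)$ with barycenter $\EE[m_y]=y$, together with Jensen's inequality for u.s.c. concave functions on the compact convex $K_i$. Measure-theoretic care is needed wherever $\varphi_i=-\infty$ on the boundary of $K_i$.
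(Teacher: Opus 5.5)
Your proposal is correct and follows the paper's proof: the same chain $\sum_i\int\varphi_i\,\dd\mu_i\le\sum_i\int\varphi_i\,\dd(\pi_{i\#}\gamma)\le\int\bar c\,\dd\gamma$, closed by Theorem \ref{weakMMdualthm} for (1)$\Rightarrow$(2) and by weak duality for (2)$\Rightarrow$(1). The one difference is that you get \eqref{MMoptimality} directly from the vanishing $\gamma$-integral of the nonnegative function $\bar c-\sum_i\varphi_i(x_i)$ on $\Omega^M\times K_{M+1}\times\dots\times K_N$, rather than going through $\text{CD}(\eta_1,\dots,\eta_N)=\text{CMM}(\eta_1,\dots,\eta_N)$ and the classical optimality relation as the paper does; this is cleaner, since the concave potentials in $\mathcal B_{\bar c}$ need not be continuous or finite and so are not literally admissible for $\text{CD}$, and your checks that $\pi_{i\#}\gamma$ is concentrated on $K_i$ and that Jensen holds for upper-semicontinuous concave potentials supply details the paper leaves implicit.
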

\begin{rmk}
	Note that in \eqref{massbalanceoptimality} the inequality
	\begin{gather*}
		\int_\Omega\varphi_i(x_i)\ \dd(\pi_{i\#}\gamma)(x_i)\geq\int_\Omega\varphi_i(x_i)\ \dd\mu_i(x_i)
	\end{gather*}
	always holds.
	The condition \eqref{massbalanceoptimality} already appeared in \cite[Lma 2]{gozlan2025} in the context of the dual problem for (the two-marginal problem) $\mathcal T_2(\mu_2|\mu_1)=W_2^2(\mu_1,\bar\eta_2)$ and is there subsequently exploited \cite[Sec 3]{gozlan2025} to construct a martingale-invariant paving between $\bar\eta_2$ and $\mu_2$.
\end{rmk}
\begin{proof}
	1. $\Rightarrow$ 2.:\\
	Let $\gamma$ be optimal for $\text{wMM}(\mu_1,\dots,\mu_N)$ and $(\varphi_i)_{i\in[N]}$ solve $\text{wDr}(\mu_1,\dots,\mu_N)$. Notice that $\gamma$ is clearly a minimizer in $\text{CMM}(\eta_1,\dots,\eta_N)$ where $\eta_i\coloneqq \pi_{i\#}\gamma$ for $i\in[N]$.
	Hence we get
	\begin{align*}
		&\sum_{i=1}^N\int_\Omega\varphi_i(x_i)\ \dd\mu_i(x_i)\\
		&\leq\sum_{i=1}^N\int_\Omega\varphi_i(x_i)\ \dd\eta_i(x_i)&&(\mu_i\leq_{\text{cx}}\eta_i\ i=1,\dots,M;\ \eta_i\leq_{\text{cx}}\mu_i\ i=M+1,\dots,N)\\
		&\leq\text{CD}(\eta_1,\dots,\eta_N)\\
		&=\text{CMM}(\eta_1,\dots,\eta_N)\\
		&=\text{wMM}(\mu_1,\dots,\mu_N)\\
		&=\sum_{i=1}^N\int_\Omega\varphi_i(x_i)\ \dd\mu_i(x_i).
	\end{align*}
	Therefore, equality holds, we get \eqref{massbalanceoptimality}
	and $(\varphi_i)_{i\in[N]}$ solve $\text{CD}(\eta_1,\dots,\eta_N)$. Hence, from the classical optimality relation for $\text{CD}(\eta_1,\dots,\eta_N)$ and $\text{CMM}(\eta_1,\dots,\eta_N)$ (see for example \cite[Prop 2.3]{kim2014}) the statement
	\eqref{MMoptimality}
	holds for $\gamma$-a.e. $(x_1,\dots,x_N)\in\Omega^N$.\\
	2. $\Rightarrow$ 1.:\\
	Let \eqref{MMoptimality} and \eqref{massbalanceoptimality} hold, then we get
	\begin{align*}
		&\sum_{i=1}^N\int_\Omega\varphi_i(x_i)\ \dd\mu_i(x_i)\\
		&=\sum_{i=1}^N\int_\Omega\varphi_i(x_i)\ \dd(\pi_{i\#}\gamma)(x_i)&&(\text{due to \eqref{massbalanceoptimality}})\\
		&=\int_{\Omega^N}\bar c(x_1,\dots,x_N)\ \dd\gamma(x_1,\dots,x_N)&&(\text{due to \eqref{MMoptimality}})\\
		&\geq\text{wMM}(\mu_1,,\dots,\mu_N)\\
		&=\text{wDr}(\mu_1,\dots,\mu_N)\\
		&\geq\sum_{i=1}^N\int_\Omega\varphi_i(x_i)\ \dd\mu_i(x_i)
	\end{align*}
	and hence equality holds, $(\varphi_i)_{i\in[N]}$ maximize $\text{wDr}(\mu_1,\dots,\mu_N)$ and $\gamma$ minimizes $\text{wMM}(\mu_1,\dots,\mu_N)$.
\end{proof}
\section{The weak interpolation problem}
Here, we consider the special case $N=2$ with $M=1$.
In this setting, we can give a more explicit description of a minimizer $\bar\nu$. Therefore, for $\mu_1,\mu_2\in\mathcal P(\Omega)$ and $t\in[0,1]$ we study the problem
\begin{gather}\label{weakmidpointproblemN2}
	\text{wC}(\mu_1,\mu_2)=\inf_{\nu\in\mathcal P(\Omega)}t\mathcal T_p(\mu_2|\nu)+(1-t)\mathcal T_p(\nu|\mu_1)
\end{gather}
together with its dual problem
\begin{gather}\label{weakdualproblemN2}
	\text{wD}(\mu_1,\mu_2)=\sup_{(\varphi_1,\varphi_2)\in\mathcal A_{\bar c_t}}\int_\Omega\varphi_1(x_1)\ \dd\mu_1(x_1)+\int_\Omega\varphi_2(x_2)\ \dd\mu_2(x_2)
\end{gather}
where $\mathcal A_{c_t}$ from \eqref{A_cset} and
\begin{gather*}
	\bar c_t(x_1,x_2)\coloneqq \inf_{z\in\Omega}t|x_2-z|^p+(1-t)|z-x_1|^p.
\end{gather*}
We recall that $\varphi_1$ is convex and $\varphi_2$ is concave and fulfill
\begin{gather*}
	\varphi_1(x_1)+\varphi_2(x_2)\leq\bar c_t(x_1,x_2).
\end{gather*}
Hence, using standard ideas \cite[p 11]{santambrogio2015}, we would like to pick
\begin{gather*}
	\varphi_1(x_1)=\inf_{x_2\in\Omega}\bar c_t(x_1,x_2)-\varphi_2(x_2)
\end{gather*}
to make it as large as possible and we note that the function
$$x_1\mapsto\inf_{x_2\in\Omega}\bar c_t(x_1,x_2)-\varphi_2(x_2)$$ is automatically convex as an infimal convolution of two convex fuctions.\\
Hence, we get
\begin{align*}
	&\text{wD}(\mu_1,\mu_2)\\
	&=\sup_{\substack{\varphi_1\in C(\Omega)\text{ convex},\
			\varphi_2\in C(\Omega)\text{ concave}\\
			\varphi_1(x_1)+\varphi_2(x_2)\leq\bar c_t(x_1,x_2)}}\int_\Omega\varphi_1(x_1)\ \dd\mu_1(x_1)+\int_\Omega\varphi_2(x_2)\ \dd\mu_2(x_2)\\
	&=\sup_{\varphi_2\in C(\Omega)\text{ concave}}\int_\Omega\varphi_2^{\bar c_t}(x_1)\ \dd\mu_1(x_1)+\int_\Omega\varphi_2(x_2)\ \dd\mu_2(x_2)\\
	&=\sup_{\substack{\varphi_1\in C(\Omega),\\
			\varphi_2\in C(\Omega)\text{ concave},\\
			\varphi_1(x_1)+\varphi_2(x_2)\leq\bar c_t(x_1,x_2)}}\int_\Omega\varphi_1(x_1)\ \dd\mu_1(x_1)+\int_\Omega\varphi_2(x_2)\ \dd\mu_2(x_2).
\end{align*}
Having $\varphi_1\in C(\Omega)$ unbounded (i.e not a-priori convex) forces $\pi_{1\#}\gamma=\mu_1$ in the proof of Theorem \ref{weakMMdualthm} and hence we get
\begin{thm}\label{weakdualityN2thm}
	Let $p\in(1,\infty)$, $\mu_1,\mu_2\in P(\Omega)$ and $t\in[0,1]$. Then
	\begin{align*}
		\text{wC}(\mu_1,\mu_2)&=\sup_{\substack{\varphi_1\in C(\Omega),\\ \varphi_2\in C(\Omega)\text{ concave},\\
				\varphi_1(x_1)+\varphi_2(x_2)\leq\bar c_t(x_1,x_2)}}\int_\Omega\varphi_1(x_1)\ \dd\mu_1(x_1)+\int_\Omega\varphi_2(x_2)\ \dd\mu_2(x_2)\\
		&=\inf_{\eta_2\leq_{\text{cx}}\mu_2}\text{CMM}(\mu_1,\eta_2)\\
		&=\inf_{\eta_2\leq_{\text{cx}}\mu_2}\text{WC}(\mu_1,\eta_2).
	\end{align*}
	Further, $\bar\eta_2=\arg\inf\limits_{\eta_2\leq_{\text{cx}}\mu_2}\text{CMM}(\mu_1,\eta_2)$ is attained uniquely.
\end{thm}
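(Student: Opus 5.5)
We obtain the chain of equalities by rerunning the Fenchel--Rockafellar argument of Theorem \ref{weakMMdualthm} with a modified constraint set, then reduce the uniqueness claim to Theorem \ref{weakforwardprojthm}.

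\textbf{Duality step.} Take $N=2$, $M=1$, weights $(1-t,t)$, and cost $\bar c_t$. Define $F$ on $C(\Omega)^2$ by
\begin{gather*}
F(\varphi_1,\varphi_2)=-\int\varphi_1\,\dd\mu_1-\int\varphi_2\,\dd\mu_2
\end{gather*}
when $\varphi_2$ is concave, and $+\infty$ otherwise. There is no convexity requirement on $\varphi_1$. Keep $G$ and $A$ exactly as in Theorem \ref{weakMMdualthm}.

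The qualification condition is checked at $(C,0)$ with $C<0$, as before. Computing $F^*(-A^*\gamma)$, the $\varphi_1$-part becomes
\begin{gather*}
\sup_{\varphi_1\in C(\Omega)}\int\varphi_1\,\dd(\mu_1-\pi_{1\#}\gamma).
\end{gather*}
This is finite only if $\pi_{1\#}\gamma=\mu_1$. The $\varphi_2$-part forces $\pi_{2\#}\gamma\leq_{\text{cx}}\mu_2$, and the $G^*$ term forces $\gamma\geq 0$. Hence the supremum in the first line of the statement equals
\begin{gather*}
\inf\Big\{\int\bar c_t\,\dd\gamma:\ \pi_{1\#}\gamma=\mu_1,\ \pi_{2\#}\gamma\leq_{\text{cx}}\mu_2\Big\}=\inf_{\eta_2\leq_{\text{cx}}\mu_2}\text{CMM}(\mu_1,\eta_2).
\end{gather*}
By Theorem \ref{chiapporithm}, this is $\inf_{\eta_2\leq_{\text{cx}}\mu_2}\text{WC}(\mu_1,\eta_2)$.

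\textbf{Linking to $\text{wC}(\mu_1,\mu_2)$.} The discussion preceding the statement shows that the three displayed suprema coincide: replacing $\varphi_1$ by $\varphi_2^{\bar c_t}$ yields a convex function and only increases the value. So the first line equals $\text{wD}(\mu_1,\mu_2)$, which equals $\text{wMM}=\text{wC}$ by Theorem \ref{weakMMdualthm} and Proposition \ref{midpoint=MMprop}.

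One point needs checking. The passage from $\varphi_1$ merely feasible to $\varphi_2^{\bar c_t}$ requires that $\varphi_2^{\bar c_t}$ be convex and continuous, since it is an infimum over $x_2$ of a function jointly convex in $(x_1,x_2)$. Joint convexity of $\bar c_t$ follows because it is an infimal projection of the jointly convex function $(x_1,x_2,z)\mapsto t|x_2-z|^p+(1-t)|z-x_1|^p$ over the convex set $\Omega$, and $-\varphi_2$ is convex. Continuity follows from compactness of $\Omega$ and continuity of $\bar c_t$.

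\textbf{Uniqueness of $\bar\eta_2$.} This is the main obstacle. I would first compute $\bar c_t$ explicitly. For $p>1$ the optimal $z$ lies on the segment $[x_1,x_2]$, giving
\begin{gather*}
\bar c_t(x_1,x_2)=\kappa_{t,p}|x_1-x_2|^p,\qquad \kappa_{t,p}=\frac{t(1-t)^{p}+(1-t)t^{p}}{\big(t^{1/(p-1)}+(1-t)^{1/(p-1)}\big)^{p}}\cdot(\text{normalization}).
\end{gather*}
The exact constant does not matter; what matters is that it is a positive multiple of $|x_1-x_2|^p$, and that $z\in\Omega$ by convexity of $\Omega$. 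Then
\begin{gather*}
\text{CMM}(\mu_1,\eta_2)=\kappa_{t,p}W_p^p(\mu_1,\eta_2),
\end{gather*}
so $\inf_{\eta_2\leq_{\text{cx}}\mu_2}\text{CMM}(\mu_1,\eta_2)=\kappa_{t,p}\mathcal T_p(\mu_2|\mu_1)$. By Theorem \ref{weakforwardprojthm}, the minimizer $\bar\eta_2$ exists and is unique, with no absolute continuity assumption on $\mu_1$.

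The endpoints $t\in\{0,1\}$ must be treated separately. There $\kappa=0$, so uniqueness can fail: every $\eta_2\leq_{\text{cx}}\mu_2$ is then optimal. I would therefore state and prove uniqueness for $t\in(0,1)$, consistent with the convention $\lambda\in\Delta_N\subset(0,1)^N$.
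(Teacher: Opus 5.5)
Your proposal is correct and follows the paper's own route: the Fenchel--Rockafellar argument of Theorem \ref{weakMMdualthm} rerun with $\varphi_1$ unconstrained (which forces $\pi_{1\#}\gamma=\mu_1$), Theorem \ref{chiapporithm} for $\text{CMM}=\text{WC}$, the reduction $\text{CMM}(\mu_1,\eta_2)=C(p,t)W_p^p(\mu_1,\eta_2)$ of Lemma \ref{cMMW_plma}, and uniqueness from Theorem \ref{weakforwardprojthm}; your pointwise identity $\bar c_t(x_1,x_2)=C(p,t)|x_1-x_2|^p$ reaches that reduction more directly than the paper's geodesic argument, though the constant you display is wrong and should be $C(p,t)=t(1-t)(t^{\frac{1}{p-1}}+(1-t)^{\frac{1}{p-1}})^{1-p}$. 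Your endpoint caveat is also right and catches a real flaw in the statement: $C(p,0)=C(p,1)=0$, so every $\eta_2\leq_{\text{cx}}\mu_2$ is optimal and uniqueness fails for $t\in\{0,1\}$ unless $\mu_2$ is a Dirac mass, while the paper's deduction (which only records $C(p,t)\geq 0$) silently needs $t\in(0,1)$.
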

\begin{proof}
	The equality of $\text{wC}(\mu_1,\mu_2)$ with the (relaxed) dual problem follows from the argument above. The connection with the two-marginal transport problem with infimal convolution cost comes from modifying the proof of Theorem \ref{weakMMdualthm} - choosing $\varphi_1$ unbounded corresponds to forcing $\pi_{1\#}\gamma=\mu_1$ in $\text{wMM}(\mu_1,\mu_2)$ which then becomes $\inf_{\eta_2\leq_{\text{cx}}\mu_2}\text{CMM}(\mu_1,\eta_2)$. The equality $\text{CMM}(\mu_1,\eta_2)=\text{WC}(\mu_1,\eta_2)$ is from Theorem \ref{chiapporithm}. From the equality $\text{CMM}(\mu_1,\eta_2)=C(p,t)W_p^p(\mu_1,\eta_2)$ in Lemma \ref{cMMW_plma} in the Appendix we get uniqueness of $\bar\eta_2$.
\end{proof}
From now on we denote by $\bar\eta_2$ the unique minimizer of $\inf\limits_{\eta_2\leq_{\text{cx}}\mu_2}\text{CMM}(\mu_1,\eta_2)$.
Having established the above theorem, we deduce the following chain of equalities:
\begin{align*}
	&\inf_{\nu\in\mathcal P(\Omega)}t\mathcal T_p(\mu_2|\nu)+(1-t)\mathcal T_p(\nu|\mu_1)\\
	&=\text{wC}(\mu_1,\mu_2)\\
	&=\inf_{\eta_2\leq_{\text{cx}}\mu_2}\text{CMM}(\mu_1,\eta_2)\\
	&=\inf_{\eta_2\leq_{\text{cx}}\mu_2}\text{WC}(\mu_1,\eta_2)\\
	&=\inf_{\nu\in\mathcal P(\Omega)}\inf_{\eta_2\leq_{\text{cx}}\mu_2}tW_p^p(\eta_2,\nu)+(1-t)W_p^p(\nu,\mu_1).
\end{align*}
Therefore, a Wasserstein centroid $\bar\nu$ that minimizes $\text{WC}(\mu_1,\bar\eta_2)$ is also a weak centroid for $\text{wC}(\mu_1,\mu_2)$.
Using the properties of the Wasserstein centroid problem, we get an explicit descrition of a minimizer of \eqref{weakmidpointproblemN2} as a McCann interpolant \cite{mccann1997} between $\mu_1$ and $\bar\eta_2$:
\begin{lma}\label{dynamicalweakbarthm}
	Let $p\in(1,\infty)$, $\mu_1\in\mathcal P_{\text{ac}}(\Omega)$, $\mu_2\in\mathcal P(\Omega)$ and $t\in[0,1]$. Then a solution of \eqref{weakmidpointproblemN2} is given through the $p$-Wasserstein geodesic
	\begin{gather*}
		\bar\nu=\big((1-s(t))\text{id}+s(t)S\big)_{\#}\mu_1
	\end{gather*}
	where $S$ is the $p$-Wasserstein optimal map between $\mu_1$ and $\bar\eta_2$ and
	\begin{gather*}
		s(t)\coloneqq\frac{t^{\frac{1}{p-1}}}{t^{\frac{1}{p-1}}+(1-t)^{\frac{1}{p-1}}}.
	\end{gather*}
\end{lma}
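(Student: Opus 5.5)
The plan is to exploit the chain of equalities just established, which reduces \eqref{weakmidpointproblemN2} to the classical two-measure Wasserstein centroid problem $\text{WC}(\mu_1,\bar\eta_2)$ with weights $(1-t,t)$. As noted after Theorem \ref{weakdualityN2thm}, any minimizer of $\text{WC}(\mu_1,\bar\eta_2)$ is a weak centroid. It therefore suffices to show that $\bar\nu=((1-s(t))\text{id}+s(t)S)_\#\mu_1$ minimizes
\[
\nu\mapsto tW_p^p(\bar\eta_2,\nu)+(1-t)W_p^p(\nu,\mu_1).
\]
Since $\mu_1\in\mathcal P_{\text{ac}}(\Omega)$, the $p$-Wasserstein optimal map $S$ from $\mu_1$ to $\bar\eta_2$ exists and is unique (Gangbo--McCann). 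Convexity of $\Omega$ guarantees that $(1-s)x+sS(x)\in\Omega$, so $\bar\nu\in\mathcal P(\Omega)$.

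The first step is the pointwise computation of $\bar c_t$ and $\bar T$. For fixed $x_1,x_2$, the minimizer $z$ of $t|x_2-z|^p+(1-t)|z-x_1|^p$ lies on the segment $[x_1,x_2]$. Writing $z=x_1+s(x_2-x_1)$, we minimize $t(1-s)^p+(1-t)s^p$ over $s\in[0,1]$. The first-order condition $t(1-s)^{p-1}=(1-t)s^{p-1}$ yields exactly $s=s(t)$. Substituting back gives $\bar c_t(x_1,x_2)=C(p,t)|x_1-x_2|^p$ with
\[
C(p,t)=t(1-s(t))^p+(1-t)s(t)^p,
\]
and $\bar T(x_1,x_2)=(1-s(t))x_1+s(t)x_2$. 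This is essentially the content of Lemma \ref{cMMW_plma}, which I will take as given: $\text{CMM}(\mu_1,\bar\eta_2)=C(p,t)W_p^p(\mu_1,\bar\eta_2)$.

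The second step identifies an optimal coupling. Since $\bar c_t$ is a positive multiple of $|x_1-x_2|^p$, the optimal couplings for $\text{CMM}(\mu_1,\bar\eta_2)$ are exactly the $W_p$-optimal couplings. By absolute continuity of $\mu_1$, this coupling is unique and equals $\bar\gamma=(\text{id}\times S)_\#\mu_1$. Theorem \ref{chiapporithm} then says that $\bar T_\#\bar\gamma$ minimizes $\text{WC}(\mu_1,\bar\eta_2)$. By construction,
\[
\bar T_\#\bar\gamma=((1-s(t))\text{id}+s(t)S)_\#\mu_1=\bar\nu.
\]
Combined with the chain of equalities, this shows $\bar\nu$ solves \eqref{weakmidpointproblemN2}.

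The main obstacle is the endpoint cases and the careful justification of the one-dimensional minimization. Two points need checking:
\begin{enumerate}
\item The minimizer $z$ lies on the segment. If $z$ is not on the segment, projecting it onto the line through $x_1,x_2$ and then clamping it to the segment strictly decreases both distances.
\item The minimizer in $s$ is unique, which follows from strict convexity of $s\mapsto t(1-s)^p+(1-t)s^p$ for $p>1$.
\end{enumerate}
For $t\in\{0,1\}$ the weights leave $\Delta_2$, so Theorem \ref{chiapporithm} does not directly apply. In these cases I would argue by hand. For $t=0$, $s=0$ gives $\bar\nu=\mu_1$, and $\mathcal T_p(\mu_1|\mu_1)=0$. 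For $t=1$, $s=1$ gives $\bar\nu=\bar\eta_2$, and $\mathcal T_p(\mu_2|\bar\eta_2)=0$ because $\bar\eta_2\leq_{\text{cx}}\mu_2$. In both cases the objective vanishes, so $\bar\nu$ is optimal.
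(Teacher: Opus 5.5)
Your proof is correct. It shares the paper's outer structure: reduce to $\text{WC}(\mu_1,\bar\eta_2)$ through the chain of equalities, then identify a minimizer of that classical problem. The identification step, however, goes a different way.

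The paper stays in Wasserstein space. Lemma \ref{cMMW_plma} restricts the infimum over $\nu$ to the points $\nu_s$ of a $p$-Wasserstein geodesic from $\mu_1$ to $\bar\eta_2$ and optimizes over $s$ to obtain $s(t)$. It then cites \cite[Thm 5.27]{santambrogio2015}: since $\mu_1$ is absolutely continuous, this geodesic is unique and is the McCann interpolant $((1-s)\text{id}+sS)_\#\mu_1$.

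You work on the ground space instead. The pointwise formulas $\bar c_t=C(p,t)|x_1-x_2|^p$ and $\bar T(x_1,x_2)=(1-s(t))x_1+s(t)x_2$ give $\text{CMM}=C(p,t)W_p^p$ directly, so you prove Lemma \ref{cMMW_plma} rather than use it. Theorem \ref{chiapporithm}, applied to the plan $(\text{id}\times S)_\#\mu_1$, then yields the interpolant with no input from the theory of Wasserstein geodesics.

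Your route buys self-containedness in two places:
\begin{enumerate}
\item It bypasses the step in Lemma \ref{cMMW_plma} where the infimum over all $\nu$ is replaced by the infimum along a geodesic. The paper states this without the standard triangle-inequality justification.
\item It handles $t\in\{0,1\}$ properly. There the weights leave $\Delta_2$, so Theorem \ref{chiapporithm} does not apply, and $C(p,t)=0$, so $\bar\eta_2$ is not even uniquely determined. Your direct check works for any $\bar\eta_2\leq_{\text{cx}}\mu_2$.
\end{enumerate}

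The paper's route additionally gives uniqueness of the Wasserstein centroid of $\mu_1$ and $\bar\eta_2$. You could recover that from the second half of Theorem \ref{chiapporithm} together with uniqueness of the optimal plan, but the lemma only asks for one solution.
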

\begin{proof}
		Lemma \ref{cMMW_plma} shows that the solution $\bar\nu$ of $\text{WC}(\mu_1,\bar\eta_2)$ is given by a $p$-Wasserstein geodesic between $\mu_1$ and $\bar\eta_2$, which is unique and has the required form \cite[Thm 5.27]{santambrogio2015} as $\mu_1\in\mathcal P_{\text{ac}}(\Omega)$. The considerations above show that $\bar\nu$ also solves $\text{wC}(\mu_1,\mu_2)$.
\end{proof}
Similarly, to how the solution to the Wasserstein centroid problem is also the solution to the Benamou-Brenier formula for $p$-Wasserstein \cite[6.2]{agueh2011}, we can now relate the solution of the weak centroid problem from Lemma \ref{dynamicalweakbarthm} to the dynamical version of the weak optimal transport, which was recently discovered by
Guo, Nilsson, and Wiesel \cite{guo2026} and Gozlan, Le Gouic, and Samson \cite{gozlan2025} for $p=2$:
\begin{thm}\cite[Thm 2]{guo2026}\cite[Thm 9]{gozlan2025}\label{guoweakbbthm}
	Let $\mu_1,\mu_2\in\mathcal P(\Omega)$. Then
	\begin{gather*}
		\mathcal T_2(\mu_2|\mu_1)=\inf_{(v_s,\sigma_s)_{s\in[0,1]}}\Big\{\mathbb{E}\Big[\int_0^1|v_s|^2\ \dd s\Big]:\ \dd X_s=v_s\dd s+\sigma_s\dd B_s, X_0\sim\mu_1,X_1\sim\mu_2\Big\}
	\end{gather*}
	where the infimum is taken over predictable processes $(v_s,\sigma_s)_{s\in[0,1]}$.
	The infimum is attained by the process
	\begin{gather*}
		\dd \bar X_s=(\nabla\bar\varphi(X_0)-X_0)\dd s+\bar\sigma_s\dd B_s
	\end{gather*}
	with $X_0\sim\mu_1$
	where $\nabla\bar\varphi$ is the $2$-Wasserstein optimal map between $\mu_1$ and $\bar\eta_2$ which solves $\inf\limits_{\eta_2\leq_{\text{cx}}\mu_2}W_2^2(\mu_1,\eta_2)$ and where $\bar\sigma_s\dd B_s$ is a stretched Brownian motion between $\bar\eta_2$ and $\mu_2$.
\end{thm}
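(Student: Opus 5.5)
The proof has two halves. First, a lower bound that holds for every admissible process. Second, an explicit process attaining the bound, built from the Brenier map of Theorem \ref{weakforwardprojthm} and a martingale coupling between $\bar\eta_2$ and $\mu_2$. Throughout I identify $\mathcal T_2(\mu_2|\mu_1)$ with $W_2^2(\mu_1,\bar\eta_2)$ via Theorem \ref{weakforwardprojthm}.

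\emph{Lower bound.} Fix $(v_s,\sigma_s)_{s\in[0,1]}$ admissible, with $X_0\sim\mu_1$ and $X_1\sim\mu_2$, and let $\gamma\coloneqq\text{law}(X_0,X_1)\in\Pi(\mu_1,\mu_2)$. I assume enough integrability (say $\EE\int_0^1|\sigma_s|^2\,\dd s<\infty$) that $\int_0^\cdot\sigma_s\,\dd B_s$ is a true martingale started at $0$. The stochastic integral then has zero conditional mean given $\mathcal F_0\supseteq\sigma(X_0)$, so
\begin{gather*}
\EE[\gamma_{X_0}]=\EE[X_1|X_0]=X_0+\EE\Big[\int_0^1 v_s\,\dd s\,\Big|\,X_0\Big].
\end{gather*}
Conditional Jensen, followed by Cauchy--Schwarz in time on $[0,1]$, gives
\begin{gather*}
\big|X_0-\EE[\gamma_{X_0}]\big|^2\leq \EE\Big[\Big|\int_0^1 v_s\,\dd s\Big|^2\,\Big|\,X_0\Big]\leq \EE\Big[\int_0^1|v_s|^2\,\dd s\,\Big|\,X_0\Big].
\end{gather*}
Taking expectations yields $\mathcal T_2(\mu_2|\mu_1)\leq\int_\Omega c_2(x,\gamma_x)\,\dd\mu_1(x)\leq\EE\int_0^1|v_s|^2\,\dd s$, so $\mathcal T_2(\mu_2|\mu_1)$ is at most the infimum.

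\emph{Upper bound and attainment.} Let $S=\nabla\bar\varphi$ be the optimal map from Theorem \ref{weakforwardprojthm}, so that $S_\#\mu_1=\bar\eta_2\leq_{\text{cx}}\mu_2$. By Strassen's theorem there is a martingale kernel $(k_y)_y$ with $\int z\,\dd k_y(z)=y$ and $\int k_y\,\dd\bar\eta_2(y)=\mu_2$. I would realise this kernel by a Brownian martingale $(N_s)_{s\in[0,1]}$ with $N_0=S(X_0)$ and $\text{law}(N_1|N_0=y)=k_y$. The choice I have in mind is the stretched Brownian motion between $\bar\eta_2$ and $\mu_2$, which exists by Backhoff--Beiglb\"ock--Huesmann--K\"allblad. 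A cruder alternative would work for attainment: choose a measurable $F$ with $F(y,B_1)\sim k_y$ for $B$ independent of $X_0$, set $N_s\coloneqq\EE[F(S(X_0),B_1)|\mathcal F_s]$, and use the martingale representation theorem in the filtration generated by $X_0$ and $B$ to write $\dd N_s=\bar\sigma_s\,\dd B_s$.

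Then define
\begin{gather*}
\bar X_s\coloneqq X_0+s\big(S(X_0)-X_0\big)+\big(N_s-S(X_0)\big).
\end{gather*}
This process satisfies $\dd\bar X_s=(S(X_0)-X_0)\,\dd s+\bar\sigma_s\,\dd B_s$ with $\bar X_0=X_0\sim\mu_1$ and $\bar X_1=N_1\sim\mu_2$. The drift is $\mathcal F_0$-measurable, hence predictable, and its energy is $\EE|S(X_0)-X_0|^2=W_2^2(\mu_1,\bar\eta_2)=\mathcal T_2(\mu_2|\mu_1)$. This matches the lower bound, so the infimum is attained by the stated process.

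\emph{Main obstacle.} I expect the hard step to be the construction of the martingale part. It must be adapted to a filtration carrying both $X_0$ and an independent Brownian motion, it must start exactly at $S(X_0)$ rather than at $0$, and its conditional terminal law must be $k_{S(X_0)}$. I would try first the conditional-expectation construction via a measurable selection of $F$ plus martingale representation, since it needs only Strassen's theorem. I would then cite the stretched Brownian motion literature to identify $\bar\sigma$ with the canonical choice named in the statement.
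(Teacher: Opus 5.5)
The paper does not prove this statement. It is quoted from \cite[Thm 2]{guo2026} and \cite[Thm 9]{gozlan2025} and used only as a black box in Theorem \ref{weakBBrelthm}. Your argument is a correct, self-contained derivation from Theorem \ref{weakforwardprojthm}. The lower bound applies conditional Jensen and Cauchy--Schwarz in time to the coupling $\text{law}(X_0,X_1)$. The matching construction uses the constant drift $S(X_0)-X_0$ plus a bounded martingale started at $S(X_0)$ that realises a Strassen kernel from $\bar\eta_2$ to $\mu_2$. Both halves are sound. Citing keeps the paper short. Your route is elementary and makes one thing transparent: the cost never sees $\sigma$, so \emph{every} martingale part of this form is optimal. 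The stretched Brownian motion in the statement is therefore a canonical choice, not something the value forces, and this is exactly the observation behind Theorem \ref{weakBBrelthm}.

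Two points should be tightened.

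First, the integrability of $\sigma$ must be part of the definition of the admissible class, not an assumption made along the way. Suppose only $\int_0^1|\sigma_s|^2\,\dd s<\infty$ a.s.\ is required. Take $v\equiv 0$ and $\sigma\equiv 0$ on $[0,\tfrac12]$. On $(\tfrac12,1]$, let $\sigma$ be diagonal with entries $(1-s)^{-1}$ until the $i$-th coordinate of $\int_{1/2}^s\sigma_r\,\dd B_r$ first hits the $\mathcal F_{1/2}$-measurable level $H^i(B_{1/2})-X_0^i$, where $H(B_{1/2})\sim\mu_2$. This Dudley-type construction gives $X_1\sim\mu_2$ with zero energy, so the infimum as literally written would be $0$. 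You should therefore define the admissible class as those $(v,\sigma)$ for which $\int_0^\cdot\sigma_s\,\dd B_s$ is a true martingale (e.g.\ $\EE\int_0^1|\sigma_s|^2\,\dd s<\infty$), in a filtration where $X_0$ is $\mathcal F_0$-measurable and $B$ is Brownian. Then note that your optimiser belongs to this class because $N$ is bounded.

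Second, the step you flag as the main obstacle is already settled by your own conditional-expectation construction. The predictable representation property holds in $\sigma(X_0)\vee\mathcal F^B_s$ when $X_0$ is independent of $B$. To use the stretched Brownian motion $M$ itself, you only need to attach $X_0$ to $M_0$:
\begin{enumerate}
\item[(i)] Disintegrate $\mu_1=\int\kappa_y\,\dd\bar\eta_2(y)$ along $S$, so that $\kappa_y(S^{-1}(y))=1$ for $\bar\eta_2$-a.e.\ $y$.
\item[(ii)] Adjoin to $\mathcal F_0$ a uniform $U$ independent of $(M,B)$, and set $X_0\coloneqq G(M_0,U)$ with $G(y,U)\sim\kappa_y$.
\end{enumerate}
Then $X_0\sim\mu_1$, $S(X_0)=M_0$, and $B$ remains a Brownian motion in the enlarged filtration.
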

Using Lemma \ref{dynamicalweakbarthm}, we recover the optimal process from Theorem \ref{guoweakbbthm} without diffusion term:
\begin{thm}\label{weakBBrelthm}
	Let $p=2$, $\mu_1\in\mathcal P_{\text{ac}}(\Omega)$, $\mu_2\in\mathcal P(\Omega)$ and $t\in[0,1]$.
	Then the solution $\bar\nu$ of $\text{wB}(\mu_1,\mu_2)$ given in Lemma \ref{dynamicalweakbarthm} is the law of $Y_t$, where the process $(Y_s)_{s\in[0,1]}$ fulfills $Y_0\sim\mu_1$, $Y_1\sim\bar\eta_2$ and $\dd Y_s=(\nabla\bar\varphi(Y_0)-Y_0)\dd s$ with $\bar\eta_2$ and $\nabla\bar\varphi$ from Theorem \ref{guoweakbbthm}.
\end{thm}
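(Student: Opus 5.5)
We need the statement of Theorem \ref{weakBBrelthm}: for $p=2$ the weak centroid $\bar\nu$ of Lemma \ref{dynamicalweakbarthm} equals the law of $Y_t$, where $\dd Y_s=(\nabla\bar\varphi(Y_0)-Y_0)\dd s$, $Y_0\sim\mu_1$. The plan is to integrate the ODE explicitly and compare with the displacement interpolant. The only real work is identifying the maps $S$ and $\nabla\bar\varphi$ and the parameter $s(t)$.

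First step: solve the deterministic dynamics. The drift $\nabla\bar\varphi(Y_0)-Y_0$ is constant in time along each trajectory, since it depends only on $Y_0$. Hence
\begin{gather*}
Y_s=Y_0+s(\nabla\bar\varphi(Y_0)-Y_0)=\big((1-s)\text{id}+s\nabla\bar\varphi\big)(Y_0).
\end{gather*}
So $\text{law}(Y_s)=((1-s)\text{id}+s\nabla\bar\varphi)_{\#}\mu_1$. In particular $Y_1=\nabla\bar\varphi(Y_0)\sim(\nabla\bar\varphi)_\#\mu_1=\bar\eta_2$, which is consistent with the required endpoint $Y_1\sim\bar\eta_2$.

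Second step: identify $\bar\eta_2$ and the map. There are two candidate objects called $\bar\eta_2$, and they must coincide:
\begin{itemize}
\item the unique minimizer of $\inf_{\eta_2\leq_{\text{cx}}\mu_2}\text{CMM}(\mu_1,\eta_2)$ from Theorem \ref{weakdualityN2thm};
\item the minimizer of $\inf_{\eta_2\leq_{\text{cx}}\mu_2}W_2^2(\mu_1,\eta_2)$ in Theorem \ref{guoweakbbthm}.
\end{itemize}
By Lemma \ref{cMMW_plma}, $\text{CMM}(\mu_1,\eta_2)=C(p,t)W_p^p(\mu_1,\eta_2)$ with $C(p,t)>0$ independent of $\eta_2$. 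So both problems have the same minimizer, which is unique by Theorem \ref{weakforwardprojthm}. The degenerate values $t\in\{0,1\}$ need a separate check: there the claim reduces directly to $\bar\nu=\mu_1$ or $\bar\nu=\bar\eta_2$, so they can be handled trivially.

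Since $\mu_1\in\mathcal P_{\text{ac}}(\Omega)$, Brenier's theorem gives a unique $W_2$-optimal map between $\mu_1$ and $\bar\eta_2$. Therefore $S=\nabla\bar\varphi$ $\mu_1$-a.e.

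Third step: compute the time parameter. For $p=2$,
\begin{gather*}
s(t)=\frac{t}{t+(1-t)}=t.
\end{gather*}
Lemma \ref{dynamicalweakbarthm} then gives $\bar\nu=((1-t)\text{id}+t\nabla\bar\varphi)_\#\mu_1=\text{law}(Y_t)$, which proves the theorem.

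The main obstacle is the second step, matching the two projections. This needs the proportionality in Lemma \ref{cMMW_plma}; for $p=2$ it follows from the explicit formula $\bar c_t(x_1,x_2)=t(1-t)|x_1-x_2|^2$, with the optimal $z=(1-t)x_1+tx_2$ lying in $\Omega$ by convexity.
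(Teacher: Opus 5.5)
Your proposal is correct and follows the paper's argument. Uniqueness of $\bar\eta_2$ (via $\text{CMM}(\mu_1,\eta_2)=C(p,t)W_2^2(\mu_1,\eta_2)$ and Theorem \ref{weakforwardprojthm}), together with Brenier's theorem, gives $S=\nabla\bar\varphi$; integrating the time-independent drift with $s(t)=t$ for $p=2$ then yields $\bar\nu=\text{law}(Y_t)$. You fill in details the paper leaves implicit, and you correctly treat the endpoints $t\in\{0,1\}$ separately: there $C(p,t)=0$, so uniqueness of $\bar\eta_2$ via $\text{CMM}$ fails, which the paper's proof passes over.
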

\begin{proof}
	By uniqueness of $\bar\eta_2$ the map $\nabla\bar\varphi$ from Theorem \ref{guoweakbbthm} is equal to $S$ from Lemma \ref{dynamicalweakbarthm}. Therefore, $\bar\nu$ is the law of $Y_t$, where the process $(Y_s)_{s\in[0,1]}$ fulfills $Y_0\sim\mu_1$, $Y_1\sim\bar\eta_2$ and $\dd Y_s=(\nabla\bar\varphi(Y_0)-Y_0)\dd s$.
\end{proof}
\section{Appendix}
\subsection{Two-marginal transport with infimal convolution cost and $p$-Wasserstein}
\begin{lma}\label{cMMW_plma}
	Let $p\in(1,\infty)$, $\mu_1,\eta_2\in\mathcal P(\Omega)$ and $t\in[0,1]$. Then
	\begin{gather*}
		\text{CMM}(\mu_1,\eta_2)=C(p,t)W_p^p(\mu_1,\eta_2)
	\end{gather*}
	with $\text{CMM}(\mu_1,\eta_2)$ from \eqref{CMM} and $C(p,t)\geq 0$.
\end{lma}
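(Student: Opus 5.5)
The plan is to compute the two-point cost $\bar c_t(x_1,x_2)=\inf_{z\in\Omega}\, t|x_2-z|^p+(1-t)|z-x_1|^p$ explicitly and show it is a constant multiple of $|x_1-x_2|^p$. Once this is done, integrating against any $\gamma\in\Pi(\mu_1,\eta_2)$ and taking the infimum gives $\text{CMM}(\mu_1,\eta_2)=C(p,t)\,W_p^p(\mu_1,\eta_2)$ directly from the definitions \eqref{CMM} and \eqref{pWasserstein}.

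\textbf{Step 1: restrict to the segment.} Since $\Omega$ is convex, the segment $[x_1,x_2]$ lies in $\Omega$. For any $z\in\RR^d$, let $z'$ be the orthogonal projection of $z$ onto the line through $x_1,x_2$, clipped to the segment. This does not increase either distance $|z-x_1|$ or $|z-x_2|$. Hence the infimum over $\Omega$ equals the infimum over $z=x_1+s(x_2-x_1)$ with $s\in[0,1]$. The infimum is then the same as the unconstrained one over $\RR^d$, so the compactness of $\Omega$ plays no role here.

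\textbf{Step 2: one-dimensional minimization.} With $r=|x_1-x_2|$, the objective becomes $r^p\big(t(1-s)^p+(1-t)s^p\big)$. Define
\[
C(p,t)\coloneqq\min_{s\in[0,1]}\, t(1-s)^p+(1-t)s^p\geq 0 .
\]
This is well defined as the minimum of a continuous function on a compact interval, and it is nonnegative as a sum of nonnegative terms. For $t\in(0,1)$ strict convexity gives a unique critical point. The first-order condition $t(1-s)^{p-1}=(1-t)s^{p-1}$ yields $s=s(t)$, the same quantity as in Lemma \ref{dynamicalweakbarthm}. Substituting back gives the closed form
\[
C(p,t)=\frac{t(1-t)}{\big(t^{1/(p-1)}+(1-t)^{1/(p-1)}\big)^{p-1}}.
\]
At $t\in\{0,1\}$ we have $C=0$ (take $s=0$ or $s=1$ respectively). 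The explicit value is a bonus; the lemma only needs the constant to exist and be nonnegative.

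\textbf{Step 3: conclude.} From Steps 1 and 2, $\bar c_t(x_1,x_2)=C(p,t)|x_1-x_2|^p$ pointwise. Integrating over couplings and taking the infimum gives the claimed identity. The minimizing point $z=(1-s(t))x_1+s(t)x_2$ also identifies the map $\bar T$ of \eqref{barTmap}, which is what Lemma \ref{dynamicalweakbarthm} uses.

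The only mild obstacle is justifying in Step 1 that the infimum over $\Omega$ agrees with the unconstrained infimum; convexity of $\Omega$ handles this. For the later uniqueness claim in Theorem \ref{weakdualityN2thm}, one also needs $C(p,t)>0$ when $t\in(0,1)$, which is clear from the closed form.
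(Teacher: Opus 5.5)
Your proof is correct, but it takes a different route from the paper. The paper works at the level of measures. It invokes Theorem \ref{chiapporithm} to write $\text{CMM}(\mu_1,\eta_2)=\inf_{\nu}(1-t)W_p^p(\mu_1,\nu)+tW_p^p(\nu,\eta_2)$, restricts $\nu$ to a constant-speed $W_p$-geodesic $(\nu_s)_{s\in[0,1]}$ from $\mu_1$ to $\eta_2$, and then minimizes $(1-t)s^p+t(1-s)^p$ over $s$. You instead prove the pointwise identity $\bar c_t(x_1,x_2)=C(p,t)|x_1-x_2|^p$, using projection onto the segment $[x_1,x_2]\subset\Omega$, and then integrate against couplings.

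Your route is more elementary and self-contained. It needs neither Theorem \ref{chiapporithm} nor the existence of Wasserstein geodesics. It also avoids a step the paper leaves implicit: why the infimum over all $\nu$ may be restricted to the geodesic. That step requires the triangle inequality $W_p(\mu_1,\nu)+W_p(\nu,\eta_2)\geq W_p(\mu_1,\eta_2)$ together with a rescaling argument.

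Your route also gives more:
\begin{itemize}
\item For $t\in(0,1)$, the optimal plans for $\text{CMM}(\mu_1,\eta_2)$ coincide with the $W_p$-optimal plans, and $\bar T(x_1,x_2)=(1-s(t))x_1+s(t)x_2$. Combined with Theorem \ref{chiapporithm}, this gives the McCann-interpolant description of Lemma \ref{dynamicalweakbarthm} directly.
\item Your closed form $C(p,t)=t(1-t)\big(t^{1/(p-1)}+(1-t)^{1/(p-1)}\big)^{1-p}$ simplifies the paper's expression.
\item The closed form makes transparent the positivity for $t\in(0,1)$ that the uniqueness claim in Theorem \ref{weakdualityN2thm} relies on. It also shows that this claim degenerates at $t\in\{0,1\}$, where $C(p,t)=0$.
\end{itemize}

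The paper's argument, by contrast, uses only the geodesic structure of $(\mathcal P(\Omega),W_p)$. This is closer in spirit to how the lemma is cited afterwards, but as written it computes only the optimal value and does not by itself identify the minimizers.
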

\begin{proof}
	Let $(\nu_s)_{s\in[0,1]}$ denote a $p$-Wasserstein geodesic between $\mu_1$ and $\eta_2$.
	We compute
	\begin{align*}
		\text{CMM}(\mu_1,\eta_2)&=\inf_{\nu\in \mathcal P(\Omega)}(1-t)W_p^p(\mu_1,\nu)+tW_p^p(\nu,\eta_2)\\
		&=\inf_{s\in[0,1]}(1-t)W_p^p(\mu_1,\nu_s)+tW_p^p(\nu_s,\eta_2)\\
		&=\inf_{s\in[0,1]}\big((1-t)s^p+t(1-s)^p\big)W_p^p(\mu_1,\eta_2).
	\end{align*}
	The minimum in $\inf_{s\in[0,1]}(1-t)s^p+t(1-s)^p$ is uniquely attained by
	\begin{gather*}
		s(t)\coloneqq\frac{t^{\frac{1}{p-1}}}{t^{\frac{1}{p-1}}+(1-t)^{\frac{1}{p-1}}}
	\end{gather*}
	and hence we get
	\begin{gather*}
		\text{CMM}(\mu_1,\eta_2)=\underbrace{\Bigg((1-t)\Big(\frac{t^{\frac{1}{p-1}}}{t^{\frac{1}{p-1}}+(1-t)^{\frac{1}{p-1}}}\Big)^p+t\Big(\frac{(1-t)^{\frac{1}{p-1}}}{t^{\frac{1}{p-1}}+(1-t)^{\frac{1}{p-1}}}\Big)^p\Bigg)}_{=:C(p,t)}W_p^p(\mu_1,\eta_2).
	\end{gather*}
\end{proof}
\subsection{Proof of Lemma \ref{dualattainmentlma}}
Before we prove Lemma \ref{dualattainmentlma}, we state a result from convex geometry, due to Csiszár and Matú\v{s} \cite{csiszar2001}.
\begin{definition}\cite[p 177]{csiszar2001}
	Let $\mu\in\mathcal P(\Omega)$. The set $\text{cc}(\mu)$ is the intersection of all convex sets $C\subset\RR^d$ such that $\mu(C)=1$. We call $\text{cc}(\mu)$ the \textit{convex core of $\mu$}.
\end{definition}
\begin{thm}\cite[Thm 3]{csiszar2001}\label{csiszarthm}
	Let $\mu\in\mathcal P(\RR^d)$ such that $\EE[\mu]$ exists and let $x\in\text{cc}(\mu)$. Then there exists $\lambda_x\in\mathcal P(\RR^d)$ such that $\lambda_x<<\mu$ and $\EE[\lambda_x]=x$ and such that there exists $L_x>0$ with $\|\frac{\dd\lambda_x}{\dd\mu}\|_{L^\infty(\mu)}<L_x$.
\end{thm}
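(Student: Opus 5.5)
The plan is to show that the set
\begin{gather*}
	A\coloneqq\Big\{\EE[\lambda]:\ \lambda\in\mathcal P(\RR^d),\ \lambda<<\mu,\ \big\|\tfrac{\dd\lambda}{\dd\mu}\big\|_{L^\infty(\mu)}<\infty\Big\}
\end{gather*}
contains $\text{cc}(\mu)$. The argument has two parts: a separation argument for points in the relative interior, and an induction on dimension over faces for points on the relative boundary.

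\emph{Step 1 (basic properties of $A$).} The set $A$ is convex, since a mixture of two such $\lambda$'s again has bounded density. Moreover, for every Borel $B$ with $0<\mu(B)$ and $B$ bounded, the normalized restriction $\mu|_B/\mu(B)$ has density at most $1/\mu(B)$. Hence the conditional mean of $\mu$ on $B$ lies in $A$. Because $\EE[\mu]$ exists, we may also take $B=\RR^d$, so $A\neq\emptyset$.

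\emph{Step 2 (relative interior).} Let $x\in\text{ri}(\text{cc}(\mu))$ and suppose $x\notin A$. By proper separation of a point from a convex set (Rockafellar, Thm 11.3, applied to $\{x\}$ and $A$) there is a vector $u$ with $u\cdot y\leq u\cdot x$ for all $y\in A$. The vector $u$ is not constant on $A\cup\{x\}$.

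Suppose $\mu(\{z: u\cdot z>u\cdot x\})>0$. Then for some $\varepsilon>0$ and some ball $R$, the set $B=\{u\cdot z>u\cdot x+\varepsilon\}\cap R$ has positive mass. Its conditional mean lies in $A$ and strictly violates the separation, which is a contradiction. Therefore the closed halfspace $H^-=\{u\cdot z\leq u\cdot x\}$ has full measure, so $\text{cc}(\mu)\subset H^-$.

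Since $x\in\text{ri}(\text{cc}(\mu))$ lies on the boundary hyperplane of $H^-$, the functional $u$ must be constant on $\text{aff}(\text{cc}(\mu))$. Conditional means of $\mu$ lie in $\text{aff}(\text{cc}(\mu))$, because a full-measure affine set is convex. Hence $u$ is constant on $A\cup\{x\}$, contradicting proper separation. So $x\in A$.

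\emph{Step 3 (relative boundary, induction on dimension).} Induct on $\dim\text{aff}(\text{cc}(\mu))$; dimension $0$ is trivial, as then $\mu=\delta_x$. Let $x\in\text{cc}(\mu)\setminus\text{ri}(\text{cc}(\mu))$, and take a supporting hyperplane $H=\{u\cdot z=u\cdot x\}$ of $\text{cc}(\mu)$ at $x$ that is nonconstant on the affine hull. By the argument of Step 2, $\mu(H^-)=1$.

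We claim that $\mu(H)>0$ and $x\in\text{cc}(\mu_H)$, where $\mu_H\coloneqq\mu|_H/\mu(H)$. To see this, note that if $\mu(H)=0$ or $x\notin\text{cc}(\mu_H)$, the set
\begin{gather*}
	\{u\cdot z<u\cdot x\}\cup\big(H\cap\text{cc}(\mu_H)\big)
\end{gather*}
(with $\text{cc}(\mu_H)$ replaced by $\emptyset$ if $\mu(H)=0$) is convex and of full $\mu$-measure, yet it misses $x$. This contradicts $x\in\text{cc}(\mu)$.

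Now $\text{cc}(\mu_H)\subset H$ has strictly smaller dimension, and $\EE[\mu_H]$ exists. By the induction hypothesis there is $\lambda_x<<\mu_H$ with mean $x$ and bounded density with respect to $\mu_H$. Its density with respect to $\mu$ is then bounded by that bound times $1/\mu(H)$, which finishes the induction.

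I expect the main obstacle to be Step 3. The convex core need not be closed, and the delicate point is checking that the set above is genuinely convex and $\mu$-full, so that points of the boundary face indeed come from mass sitting on $H$. A secondary technical point is making the separation in Step 2 proper when $A$ need not be closed, which is why the version for relative interiors is used.
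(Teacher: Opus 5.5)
The paper gives no proof of this statement. It is quoted from Csisz\'ar--Mat\'u\v{s}, and the appendix only reuses their finite convex-combination device to make $L_x$ uniform on compacts. Your outline (proper separation for $x\in\text{ri}(\text{cc}(\mu))$, then induction over faces for boundary points) is workable, but it has a genuine gap: it silently uses the reverse inclusion $A\subset\text{cc}(\mu)$, which you never prove.

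This inclusion enters in three places.
\begin{enumerate}
\item In Step 2, ``conditional means lie in $\text{aff}(\text{cc}(\mu))$'' does not follow from ``a full-measure affine set is convex''. That only shows $\text{cc}(\mu)$ is contained in every $\mu$-full affine set, not that $\text{aff}(\text{cc}(\mu))$ is itself $\mu$-full. Without this you cannot transfer constancy of $u$ from $\text{aff}(\text{cc}(\mu))$ to $A$.
\item In Step 3, ``by the argument of Step 2, $\mu(H^-)=1$'' fails as stated. That argument used $A\subset H^-$, whereas now $H$ only supports $\text{cc}(\mu)$.
\item The base case ($\dim=0\Rightarrow\mu=\delta_x$) and the dimension drop $\text{cc}(\mu_H)\subset H\cap\text{aff}(\text{cc}(\mu))$ both need $\text{aff}(\text{cc}(\mu))$ to be $\mu$-full.
\end{enumerate}

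The missing ingredient is a Jensen-type lemma for possibly non-closed convex sets: if $\lambda$ has a mean and $\lambda(C)=1$ for a convex $C$, then $\EE[\lambda]\in C$. It is true, but it needs its own induction on $\dim\text{aff}(C)$. If $\EE[\lambda]\notin C$, separate properly; the separating functional is then $\lambda$-a.s.\ equal to its value at $\EE[\lambda]$, so $\lambda$ lives on $C\cap H$, which has smaller dimension. With this lemma you get $A\subset\text{cc}(\mu)$. Moreover, conditional means of $\mu$ on small balls around support points approximate those points, so $\text{supp}(\mu)\subset\overline{A}\subset\text{aff}(\text{cc}(\mu))$. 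Hence $\text{aff}(\text{cc}(\mu))$ is $\mu$-full, which repairs all three places. Alternatively, induct on $\dim\text{aff}(\text{supp}(\mu))$ instead.

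There is a second, smaller error in Step 3. The set $\{u\cdot z<u\cdot x\}\cup(H\cap\text{cc}(\mu_H))$ is not $\mu$-full in general, because $\text{cc}(\mu_H)$ can be $\mu_H$-null: if $\mu_H$ is uniform on a circle, $\text{cc}(\mu_H)$ is the open disc. This is exactly the point you flagged as delicate, and it does fail as written. Use a witness set instead. If $x\notin\text{cc}(\mu_H)$, intersecting a witness from the definition with $H$ gives a convex $D\subset H$ with $\mu_H(D)=1$ and $x\notin D$. Then $\{u\cdot z<u\cdot x\}\cup D$ is convex, $\mu$-full (since $\mu(H^-)=1$) and misses $x$. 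With the lemma added and this set replaced, your argument goes through.
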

We now give the proof of Lemma \ref{dualattainmentlma}, split into 5 steps:
\begin{proof}

	\setdefaultleftmargin{0pt}{}{}{}{}{}
	\begin{enumerate}
	\item \textit{Set-up:}
	Let $(\varphi_1^n,\dots,\varphi_N^n)_{n\in\NN}\subset\mathcal A_{\bar c}$ be a maximizing sequence for
	\begin{gather}\label{dualfunctional}
		(\varphi_1,\dots,\varphi_N)\mapsto \sum_{i=1}^M\int_\Omega\varphi_i(x_i)\ \dd\mu_i(x_i)+\sum_{i=M+1}^N\int_{K_i}\varphi_i(x_i)\ \dd\mu_i(x_i).
	\end{gather}
	\eqref{weakMMdual}, hence clearly the sequence is in $\mathcal B_{\bar c}$ and also maximizes \eqref{weakMMdualrelaxed}.
	We improve the value of \eqref{dualfunctional} by replacing for $j=1,\dots,M$
	\begin{gather*}
		\varphi_j^n(x_j)\rightarrow (\hat\varphi_j^{n})^{**}(x_j)\coloneqq\Big(\inf_{(x_i)_{i\neq j}}\bar c(x_1,\dots,x_N)-\sum_{i\neq j}\varphi_i^n(x_i)\Big)^{**}(x_j)
	\end{gather*}
	for $j=1,\dots,M$. Here, $f^{**}$ denotes the biconjugate which has the property that it fulfills $f^{**}\leq f$ and is the largest convex function to do so (continuity of $(\hat\varphi_j^n)^{**}$ will follow later in the proof).\\
	We also normalize the first $N-1$ potentials through
	\begin{gather*}
		\varphi_1^n(\EE[\mu_1])=\dots=\varphi_{N-1}^n(\EE[\mu_{N-1}])=0
	\end{gather*}
	for all $n\in\NN$.
	\item \textit{Bounds for the convex potentials:}
	Fix a $j=1,\dots,M$ and write
	\begin{gather*}
		\inf_{(x_i)_{i\neq j}}\bar c(x_1,\dots,x_N)-\sum_{i\neq j}\varphi_i^n(x_i)=\inf_{z\in\Omega}\lambda_j|x_j-z|^p+\sum_{i\neq j}\inf_{x_i\in\Omega}\lambda_i|x_i-z|^p-\varphi_i^n(x_i).
	\end{gather*}
	Therefore, $\hat\varphi_j^n$ is given through an infimal convolution and hence (extending the functions in the infimal convolution to $+\infty$ outside of $\Omega$ to ensure the suprema are taken inside of $\Omega$)
	\begin{gather*}
		\varphi_j^n(x_j)=\sup_{\xi\in\RR^d}x_j\cdot\xi-\big(\lambda_j|\cdot|^p\big)^*(\xi)-\Big(\sum_{i\neq j}\inf_{x_i\in\Omega}\lambda_i|x_i-\cdot|^p-\varphi_i^n(x_i)\Big)^*(\xi).
	\end{gather*}
	By picking $\xi=0$ we get
	\begin{align*}
		\varphi_j^n(x_j)&\geq-\big(\lambda_j|\cdot|^p\big)^*(0)-\Big(\sum_{i\ne j}\inf_{x_i\in\Omega}\lambda_i|x_i-\cdot|^p-\varphi_i^n(x_i)\Big)^*(0)\\
		&=-\sup_{v_1\in\Omega}-\lambda_j|v_1|^p-\sup_{v_2\in\Omega}-\sum_{i\neq j}\inf_{x_i\in\Omega}\lambda_i|x_i-v_2|^p-\varphi_i^n(x_i)\\
		&=\inf_{v_1\in\Omega}\lambda_j|v_1|^p+\inf_{v_2\in\Omega}\sum_{i\neq j}\inf_{x_i\in\Omega}\lambda_i|x_i-v_2|^p-\varphi_i^n(x_i)\\
		&=\inf_{v_2\in\Omega}\sum_{i\neq j}\inf_{x_i\in\Omega}\lambda_i|x_i-v_2|^p-\varphi_i^n(x_i)=:C_1(n)
	\end{align*}
	and we also get
	\begin{align*}
		\varphi_j^n(x_j)&\leq\inf_{z\in\Omega}\lambda_j|x_j-z|^p+\sum_{i\neq j}\inf_{x_i\in\Omega}\lambda_i|x_i-z|^p-\varphi_i^n(x_i)\\
		&\leq \inf_{z\in\Omega}\lambda_j\text{diam}(\Omega)^p+\sum_{i\neq j}\inf_{x_i\in\Omega}\lambda_i|x_i-z|^p-\varphi_i^n(x_i)=:C+C_1(n)
	\end{align*}
	for some $C>0$ and we notice that $C_1(n)\in[-\infty,+\infty]$ does not depend on $x_j$ anymore.
	Because of $0=\varphi_j^n(\EE[\mu_j])$ we get
	\begin{gather*}
		C_1(n)\leq 0\leq C+C_1(n).
	\end{gather*}
	which shows equivalently that
	\begin{gather*}
		-C\leq C_1(n)\leq 0
	\end{gather*}
	and that therefore $-C\leq\varphi_j^n(x_j)\leq C$ with $C\in\RR$ independent of $n$ and $x_j$ (and it shows that replacing $\varphi_j^n$ by the bi-conjugate preserves continuity).
	\item \textit{Upper bound for the concave potentials:}
	For $j=M+1,\dots,N-1$ we get, using the reverse Jensen inequality,
	\begin{gather}\label{integralbound}
		0=\varphi_j^n(\EE[\mu_j])\geq\int_{\Omega}\varphi_j^n(x_j)\ \dd\mu_j(x_j).
	\end{gather}
	Because $(\varphi_1^n,\dots,\varphi_N^n)_{n\in\NN}$ are a maximizing sequence for \eqref{weakMMdual}, we get a lower bound
	\begin{gather}\label{lowerboundmaximization}
		C\leq\sum_{i=1}^N\int_{\Omega}\varphi_i^n(x_i)\ \dd\mu_i(x_i).
	\end{gather}
	Recalling that we have
	\begin{gather*}
		\varphi_j^n(x_j)\leq\bar c(x_1,\dots,x_N)-\sum_{i\neq j}\varphi_i^n(x_i)
	\end{gather*}
	and integrating with respect to the product measure
	\begin{gather*}
		\gamma_{-j}\coloneqq\mu_1\otimes\dots\mu_{j-1}\otimes\mu_{j+1}\otimes\dots\otimes\mu_N
	\end{gather*} we get a uniform upper bound for $\varphi_j^n$ through
	\begin{align*}
		\varphi_j^n(x_j)&\leq\int_{\Omega^{d-1}}\bar c(x_1,\dots,x_N)\ \dd\gamma_{-j}-\sum_{i\neq j}\int_{\Omega}\varphi_i^n(x_i)\ \dd\mu_i(x_i)\\
		&\leq\int_{\Omega^{d-1}}\bar c(x_1,\dots,x_N)\ \dd\gamma_{-j}-\sum_{i=1}^N\int_{\Omega}\varphi_i^n(x_i)\ \dd\mu_i(x_i)\\
		&\leq C
	\end{align*}
	where in the second line we have used \eqref{integralbound} and in the third line we have used the boundedness of $\bar c$ over $\Omega$ and \eqref{lowerboundmaximization}.\\
	For $j=N$ (the unnormalized concave $\varphi_N^n$) we get the upper bound
	\begin{gather*}
		\varphi_N^n(x_N)\leq\bar c(\EE[\mu_1],\dots,\EE[\mu_{N-1}],x_N)\leq C
	\end{gather*}
	using again the boundedness of $\bar c$ over $\Omega$.
	\item \textit{Lower bound for the concave potentials:}
	Fix $j=M+1,\dots,N$ and recall that $K_j=\text{conv}(\text{supp}(\mu_j))$ is a compact set. Fix $C_j\Subset \text{int}(K_j)$. We extend the pointwise bound $L_x$ from Theorem \ref{csiszarthm} to a uniform bound $L_{C_j}$ on $C_j$: We have $\overline{\text{cc}(\mu_j)}=K_j$ by \cite[Lma 1]{csiszar2001} and hence $C_j\subset\text{int}(K_j)=\text{int}(\overline{\text{cc}(\mu_j)})=\text{int}(\text{cc}(\mu_j))\subset\text{cc}(\mu_j)$. Therefore, by Theorem \ref{csiszarthm} for each $x\in C_j$ there exists $\lambda_x\in\mathcal P(\RR^d)$ such that $\lambda_x<<\mu_j$ and $\EE[\lambda_x]=x$ and such that there exists $L_x>0$ with $\|\frac{\dd\lambda_x}{\dd\mu_j}\|_{L^\infty(\mu_j)}<L_x$.\\ We now redo the proof of \cite[Thm 3]{csiszar2001} to show that $L_x$ can be chose uniformly for all $x\in C_j$:
	We fix $x_0\in C_j$ and because $x_0\in\text{int}(K_j)$ we find points $a_1,\dots,a_d\in\text{int}(K_j)$ such that $x_0\in\text{int}(\text{conv}(a_1,\dots,a_d))$. For every $x\in\text{conv}(a_1,\dots,a_d)$, there exist $(\alpha_l)_{l\in[d]}\subset[0,1]^d$ such that $\sum_{l=1}^d\alpha_l=1$ and such that
	\begin{gather*}
		x=\sum_{l=1}^d\alpha_la_l.
	\end{gather*}
	We define the probability measure
	\begin{gather*}
		\beta_{x}\coloneqq\sum_{l=1}^d\alpha_l\lambda_{a_l},
	\end{gather*}
	using the probability measures $\lambda_{a_l}$ for $l=1,\dots,d$ from Theorem \ref{csiszarthm}. Then,
	\begin{gather*}
		\EE[\beta_x]=\sum_{l=1}^d\alpha_l\EE[\lambda_{a_l}]=\sum_{l=1}^d\alpha_la_l=x,
	\end{gather*}
	$\beta_x<<\mu_j$ and
	\begin{gather*}
		\Big\|\frac{\dd\beta_x}{\dd\mu_j}\Big\|_{L^\infty(\mu_j)}=\Big\|\sum_{l=1}^d\alpha_l\frac{\dd\lambda_{a_l}}{\dd\mu_j}\Big\|_{L^\infty(\mu_j)}\leq \sum_{l=1}^d\alpha_lL_{a_l}\leq\max_{l=1,\dots,d}L_{a_l}.
	\end{gather*}
	Because $x_0\in\text{int}(\text{conv}(a_1,\dots,a_d))$ there exists a neighborhood $U_{x_0}\subset\text{conv}(a_1,\dots,a_m)$ and because $C_j$ is compact there is a countable number $x_1,\dots,x_O$ of points such that their associated $\text{conv}(a_1,\dots,a_m)$ cover are $C_j$. Therefore, there exists
	\begin{gather*}
		L_{C_j}\coloneqq \max_{r=1,\dots,O} L_{x_r}
	\end{gather*}
	such that for every $x\in C_j$ there exists a probability measure $\beta_x<<\mu_j$ with $\EE[\beta_x]=x$ and such that $\|\frac{\dd\beta_x}{\dd\mu_j}\|_{L^\infty(\mu_j)}\leq L_{C_j}$.\\
	From step (2), we know that for every $j=1,\dots,M$ we have
	\begin{gather*}
		C\leq \int_\Omega\varphi_j^n(x_j)\ \dd\mu_j(x_j)\leq C.
	\end{gather*}
	For every $j=M+1,\dots,N$ we have due to step (3) that 
	\begin{gather*}
		\int_{\Omega}\varphi_j^n(x_j)\ \dd\mu_j(x_j)\leq C
	\end{gather*}
	and hence because $(\varphi_1^n,\dots,\varphi_N^n)_{n\in\NN}$ is a maximizing sequence of \eqref{dualfunctional} we also get for every $j=M+1,\dots,N$ that 
	\begin{gather*}
		\int_{K_j}\varphi_j^n(x_j)\ \dd\mu_j(x_j)\geq C.
	\end{gather*}
	Now for each $x\in C_j$ we get from Jensen's inequality and the constructed $\beta_x$ that
	\begin{gather*}
		\varphi_j^n(x)=\varphi_j^n(\EE[\beta_x])\geq \int_\Omega \varphi_j^n(y)\ \dd\beta_x(y)=\int_\Omega \varphi_j^n(y) \frac{\dd\beta_x}{\dd\mu_j}(y)\ \dd\mu_j(y).
	\end{gather*}
	Recalling that in step (3) we got $\varphi_j^n(x_j)\leq A$, with $A$ independent of $x_j$ and $n$, we get the lower bound
	\begin{align*}
		\varphi_j^n(x_j)&\geq A-\int_\Omega \big(A-\varphi_j^n(y)\big) \frac{\dd\beta_x}{\dd\mu_j}(y)\ \dd\mu_j(y)\\
		&\geq A-\Big\|\frac{\dd\beta_x}{\dd\mu_j}\Big\|_{L^\infty(\mu_j)}\int_\Omega \big(A-\varphi_j^n(y)\big) \ \dd\mu_j(y)\\
		&\geq A-L_{C_j}(A-C)
	\end{align*}
	which is independent of $x_j$ and $n$ (but still depends on $C_j$).
	\item \textit{Passing to the limit:}
	Due to the equiboundedness on the whole of $\Omega$ that we found in step (2) for each $j=1,\dots,M$, we can apply Arzela-Ascoli to each $(\varphi_j^n)_{n\in\NN}$ to get a convergent subsequence and using a diagonalization argument we find a subsequence of  $(\varphi_1^n,\dots,\varphi_N^n)_{n\in\NN}$ (again denoted by $(\varphi_1^n,\dots,\varphi_N^n)_{n\in\NN}$), such that for each $j=1,\dots,M$ and for each $x_j\in\Omega$ we have $\varphi_j^n(x_j)\underset{n\rightarrow\infty}{\longrightarrow}\bar\varphi_j(x_j)$ where $\bar\varphi_j$ is convex.\\
	For $j=M+1,\dots,N$ we get equiboundeness of $(\varphi_j^n)_{n\in\NN}$ for every compact $C_j\subset\text{int}(K_j)$ and using a diagonalization argument we find a subsequence of $(\varphi_j^n)_{n\in\NN}$ (again denoted by $(\varphi_j^n)_{n\in\NN}$) such that $\varphi_j^n(x_j)\underset{n\rightarrow\infty}{\longrightarrow}\bar\varphi_j(x_j)\in\RR$ for every $x_j\in\text{int}(K_j)$. Using another diagonalization argument over $j=M+1,\dots,N$, we finally arrive at a subsequence $(\varphi_1^n,\dots,\varphi_N^n)_{n\in\NN}$ that converges pointwise (for the convex potentials on the whole of $\Omega$ and for the concave potentials only on $\text{int}(K_j)$) to $(\bar\varphi_1,\dots,\bar\varphi_N)$. To extend the concave $\bar\varphi_j$ with $j=M+1,\dots,N$ to all of $\Omega$, we replace them with
	\begin{gather*}
		\hat\varphi_j(x_j)\coloneqq\underset{\substack{y_j\rightarrow x_j\\ y_j\in\text{int}(K_j)}}{\lim\sup}\bar\varphi_j(y_j)\in[-\infty,C].
	\end{gather*}
	Because $\bar\varphi_j$ is continuous on $\text{int}(K_j)$, we have for every $x_j\in \text{int}(K_j)$
	\begin{gather*}
		\hat\varphi_j(x_j)=\bar\varphi_j(x_j)
	\end{gather*}
	and additionally $\hat\varphi_j$ is upper-semicontinuous and concave (but it might take the value $-\infty$ on the boundary of $K_j$).\\
	Let $(x_1,\dots,x_N)\in\Omega^M\times K_{M+1}\times\dots\times K_N$ and let for each $j=M+1,\dots,N$ $(y_j^k)_{k\in\NN}\subset K_j$ be a sequence realizing the $\lim\sup$ in $\hat\varphi_j(x_j)$.
	Then we get
	\begin{align*}
		\sum_{i=1}^M\bar\varphi_i(x_i)+\sum_{i=M+1}^N\hat\varphi_i(x_i)&=\lim_{n\rightarrow\infty}\lim_{k\rightarrow\infty}\sum_{i=1}^M\varphi_i^n(x_i)+\sum_{i=M+1}^N\varphi_i^n(y_i^k)\\
		&\leq \lim_{n\rightarrow\infty}\lim_{k\rightarrow\infty}\bar c(x_1,\dots,x_M,y_{M+1}^k,\dots,y_{N}^k)\\
		&=\bar c(x_1,\dots,x_N)
	\end{align*}
	and hence $(\bar\varphi_1,\dots,\bar\varphi_M,\hat\varphi_{M+1},\dots,\hat\varphi_{N})\in\mathcal B_{\bar c}$.\\
	Applying Fatou's lemma (notice that the upper bounds found in steps (2) and (3) are integrable and also apply to $\hat\varphi_j$) we get
	\begin{align*}
		\text{wDr}(\mu_1,\dots,\mu_N)
		&\leq\underset{n\rightarrow\infty}{\lim\sup}\sum_{i=1}^M\int_\Omega\varphi_i^n(x_i)\ \dd\mu_i(x_i)+\sum_{i=M+1}^N\int_{K_i}\varphi_i^n(x_i)\ \dd\mu_i(x_i)\\
		&\leq\sum_{i=1}^M\int_\Omega\underset{n\rightarrow\infty}{\lim\sup}\ \varphi_i^n(x_i) \dd\mu_i(x_i)+\sum_{i=M+1}^N\int_{K_i}\underset{n\rightarrow\infty}{\lim\sup}\ \varphi_i^n(x_i) \dd\mu_i(x_i)\\
		&=\sum_{i=1}^M\int_\Omega\bar\varphi_i(x_i)\ \dd\mu_i(x_i)+\sum_{i=M+1}^N\int_{K_i}\bar\varphi_i(x_i)\ \dd\mu_i(x_i)\\
		&=\sum_{i=1}^M\int_\Omega\bar\varphi_i(x_i)\ \dd\mu_i(x_i)+\sum_{i=M+1}^N\int_{K_i}\hat\varphi_i(x_i)\ \dd\mu_i(x_i)
	\end{align*}
	which proves that \eqref{weakMMdualrelaxed} admits maximizers.
	\end{enumerate}
\end{proof}

\section*{Acknowledgements}
The author is grateful to Robert McCann and Adrian Nachman for their supervision and ongoing support while writing this manuscript. The autor further thanks Yair Shenfeld for suggesting the topic of the present manuscript to him.

\section*{Tool and computational resource disclosure}
The author used ChatGPT, provided by OpenAI, to brainstorm approaches to mathematical arguments and identify relevant literature. The tools were used to generate suggestions for subsequent evaluation, not as sources of mathematical or bibliographic authority. Especially, the relaxed problem \eqref{weakMMdualrelaxed}, the use of the biconjugate in the proof of Lemma \ref{dualattainmentlma} and the argument for step (4) in this proof were suggested by ChatGPT.
The author independently checked every mathematical argument and verified every citation against the cited source without
relying on artificial intelligence. The author takes full responsibility for the accuracy, originality, and integrity of the manuscript

\bibliographystyle{plain}
\bibliography{bibliography}

\end{document}